\newtheorem{theorem}{Theorem}
\newtheorem{proposition}{Proposition}
\newtheorem{lemma}{Lemma}
\newtheorem{conjecture}{Conjecture}
\newtheorem{definition}{Definition}
\newcommand{\join}{\ensuremath{\vee}} 
\newcommand{\meet}{\ensuremath{\wedge}} 
\newcommand{\mb}[1]{\ensuremath{\mathbf{#1}}} 
\newcommand{\rgf}[1]{\ensuremath{\mathbf{rgf}({#1})}} 
\newcommand{\h}[1]{h(#1)}    
\newcommand{\bin}[3]{\ensuremath{\binom{#1}{#2}}_{#3}}
\newcommand{\faq}[1] {\ensuremath {(#1)!_q}}
\newcommand{\hide}[1]{}
\newcommand{\hidecompletely}[1]{}
\newcommand{\bo}{\mb b}
\newcommand{\tp}{\mb t}
\begin{document}
\title{The Tutte $q$-Polynomial}
\author{Guus Bollen, Henry Crapo, Relinde Jurrius}
\date{\today}

\maketitle

\section{Introduction}

The concept of a {\it $q$-matroid} has recently been developed in the context of coding theory by Relinde Jurrius and her colleague Ruud Pellikaan [JP]. As is the case for matroids, defined by Hassler Whitney, there are a fair number of equivalent axiom systems for $q$-matroids. Gian Carlo Rota introduced the adjective  {\it cryptomorphic} to describe such equivalence of axiom systems, since the axioms are phrased in terms of {\it different} fundamental objects and measures:  rank, independent sets, bases, circuits, copoints, and other related structures, each being a sort of ``code'' for the others. Hassler Whitney's first paper [W1] on this subject was devoted to proving the equivalence of these cryptomorphic axiom systems for matroids.

The only essential difference between matroids and $q$-matroids is in the choice of a different type of {\it support lattice} to replace the Boolean algebra of subsets of the set of matroid elements (including loops and points, both simple and multiple). For $q$-matroids, the support lattice is a (more general) complemented modular lattice. In this paper we will restrict our attention to support lattices $L_{q,k}$  of height $k$ of subspaces of a vector space over the finite field $GF(q)$, where $q$ is a prime power $q=p^n$, these being the lattices of flats of projective geometries of dimension $k-1$. We reserve the term ``rank'' for a measurement made {\it within} a $q$-matroid, so we will call this value $k$ the {\it height} of the support lattice $L$, written $\h{L} = k$, the length of any maximal chain $x_0<x_1<\dots<x_k$ of flats in $L$. The flats of these lattices can be coordinatized by Grassmann coordinates calculated over $GF(q)$. 
\hide{({\it Question: are support lattices $L_{q,k}$ well defined for $q$ not a prime power? If so, is there a sensible "coordinatisation" of the flats?}.)}

The lattice elements of a support lattice $L$, these being subspaces of a vector space $V=L_{q,k}$ we will call {\it support flats}. (The adjective will be essential, because we wish also to refer to the ``flats'' of the $q$-matroid constructed upon the support lattice $L$, these being the {\it closed} support flats (see below). As for classical matroids, support flats of height $1$ are {\it elements} of the associated $q$-matroid, and can be loops, points, or even elements of multiple points in the $q$-matroid.

So the support lattices $L=L_{q,k}$ of $q$-matroids will here be not only complemented modular, but also irreducible: they will have no decomposition as a direct sum of smaller lattices\footnote{Common properties of matroids and $q$-matroids are  easily proven more generally for support lattices that are complemented modular, or even merely modular. See the last section of this article.}.

In their paper [JP],  Jurrius and Pellikaan chose first to define $q$-matroids in terms of the concept of a {\it rank function}. 
\begin{definition}
A {\it$q$-matroid} $M$ is a pair $(L,\rho)$, in which $L=L_{q,k}$ is a finite-dimensional vector space over a field $\mathbb{F}$, and $\rho$ an integer-valued function defined on the set of subspaces of $L$, called the {\it rank}, such that for all subspaces $A,B \; \in L$,
\begin{enumerate}
\item[$R_1$\quad\quad] $0\;\leq\;\rho(A)\;\leq\;\h{A}$. ({\it bounded by height})
\item[$R_2$\quad\quad] If $A\subseteq B$, then $\rho(A)\leq \rho(B)$. ({\it increasing})
\item[$R_3$\quad\quad] $\rho(A + B) + \rho(A\cap B) \;\leq\;\rho(A) + \rho(B)$. ({\it submodularity})
\end{enumerate}
\end{definition}
\noindent this being a straight-forward analogue of the definition of a classical matroid (on a set) in terms of its rank function. 

Perhaps the most intuitive definition of $q$-matroids is that they are {\it strong map images} of support lattices $L_{q,k}$, bearing in mind that their support lattices are themselves geometric, and that the {\it image} of any strong map between geometric lattices is itself geometric. Recall that a lattice is {\it geometric} if and only if it is semimodular, atomistic, and relatively complemented, and that a {\it strong map} $\sigma: G\to H$ of a geometric lattice $G$ (i.e.: of the lattice of flats of a matroid) to a geometric lattice $H$ is any map that preserves joins, and preserves the relation $\downarrow$, where $y\downarrow x$ if and only if $y$ covers or is equal to $x$.  

Since the rank function of a $q$-matroid $M$ is semimodular, with increase by either $0$ or $1$ on every covering in $L$, the lattice $L_M$ of closed flats in $L$ is geometric, and is the image of the strong map $\sigma: L \twoheadrightarrow L_M$.

\begin{theorem} 
A lattice $L_M$ is the lattice of closed flats of a $q$-matroid defined on a support lattice $L_{q,k}$ if and only if $L_M$ is the image of the projective geometry $L_{q,k}$ under a strong map $L_{q,k} \twoheadrightarrow L_M$ onto $L_M$.
\end{theorem}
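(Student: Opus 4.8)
The statement is a biconditional whose forward implication is essentially the content of the remark preceding it, so I would begin by making that remark precise and then spend most of the effort on the converse. Given a $q$-matroid $M=(L,\rho)$, define $\sigma\colon L\to L_M$ by sending each support flat $A$ to its closure $\bar A$, the largest flat containing $A$ with $\rho(\bar A)=\rho(A)$. The target $L_M$ is the set of closed flats, which is geometric by the facts already recorded. To see that $\sigma$ is a strong map onto $L_M$, I would check surjectivity (each closed flat is its own closure), preservation of joins (the identity $\overline{A+B}=\overline{\bar A+\bar B}$, a standard consequence of the closure-operator axioms), and preservation of $\downarrow$: on a covering $A\lessdot B$ in $L$ the rank $\rho$ increases by $0$ or $1$, so either $\bar A=\bar B$ or $\bar B$ covers $\bar A$ in $L_M$, and in both cases $\sigma(B)\downarrow\sigma(A)$.

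For the converse I would construct the $q$-matroid directly from the strong map. Given a surjective strong map $\sigma\colon L_{q,k}\twoheadrightarrow L_M$ onto a geometric lattice $L_M$, define $\rho(A)=\h{\sigma(A)}$, the height of $\sigma(A)$ in $L_M$ (well defined since a geometric lattice is graded). I would then verify the axioms $R_1$--$R_3$. For $R_1$, a maximal chain $0=A_0\lessdot\dots\lessdot A_{\h{A}}=A$ maps under $\sigma$ to a chain that rises by at most one cover at each step, since $\sigma$ preserves $\downarrow$, and that starts at $\sigma(0)=0$ (forced by surjectivity and order-preservation); hence $\h{\sigma(A)}\le\h{A}$. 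Axiom $R_2$ is immediate from order-preservation. For $R_3$, join-preservation gives $\sigma(A+B)=\sigma(A)\vee\sigma(B)$, while $\sigma(A\cap B)\le\sigma(A)\wedge\sigma(B)$; combining these with the semimodular inequality $\h{x\vee y}+\h{x\wedge y}\le\h{x}+\h{y}$ in $L_M$ yields submodularity of $\rho$.

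It remains to show that the closed flats of the $q$-matroid $M=(L,\rho)$ reproduce $L_M$. I would introduce the section $\phi\colon L_M\to L$ defined by $\phi(t)=\bigvee\{x\in L:\sigma(x)\le t\}$ and check, using join-preservation and surjectivity, that $\sigma\circ\phi=\mathrm{id}_{L_M}$. The key computation is to identify the closure operator of $M$ with $\phi\circ\sigma$, that is, to prove $\bar A=\phi(\sigma(A))$: one inclusion is clear, and for the other I would use that an atom $x\le C$ with $A\subseteq C$ and $\rho(C)=\rho(A)$ forces $\sigma(C)=\sigma(A)$, because comparable elements of equal height in a geometric lattice coincide, so $\sigma(x)\le\sigma(A)$. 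It then follows that the closed flats of $M$ are exactly the elements $\phi(t)$, and that $\sigma$ and $\phi$ restrict to mutually inverse order-isomorphisms between $L_M$ and the lattice of closed flats; since an order-isomorphism of lattices is automatically a lattice isomorphism, this completes the proof.

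I expect the main obstacle to be this last step: the passage from ``$\sigma$ is a surjective strong map'' to ``the closed flats of $\rho$ are precisely $L_M$.'' Join-preservation alone does not suffice, since it only controls the rank on joins; it is the covering axiom $\downarrow$, together with the fact that comparable flats of equal height in a geometric lattice are equal, that pins the closure operator down to $\phi\circ\sigma$ and prevents $\rho$ from collapsing distinct elements of $L_M$.
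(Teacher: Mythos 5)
Your proposal is correct, and it supplies considerably more than the paper does: the paper states this theorem without any proof, the only supporting argument being the sentence immediately preceding it, which asserts that semimodularity and unit rank-increase on coverings make the lattice $L_M$ of closed flats geometric and exhibit it as the image of the closure map $\sigma\colon L_{q,k}\twoheadrightarrow L_M$. That sentence is precisely your forward direction, which you flesh out correctly: well-definedness and monotonicity of closure, the identity $\overline{A+B}=\overline{\bar{A}+\bar{B}}$, and the cover-to-cover-or-equal property all follow from submodularity $R_3$, and together they say exactly that $\sigma$ preserves joins and the relation $\downarrow$. Your converse has no counterpart in the paper and is the part that actually completes the biconditional: defining $\rho(A)=h(\sigma(A))$, deducing $R_1$ from preservation of $\downarrow$ together with the fact that $\sigma$ sends the bottom flat of $L_{q,k}$ to the bottom of $L_M$ (which, as you note, is forced by surjectivity and order-preservation), $R_2$ from monotonicity, and $R_3$ from join-preservation plus semimodularity of the geometric image; then pinning down the closure operator of $(L,\rho)$ as $\phi\circ\sigma$, where $\phi(t)=\bigvee\{x\in L:\sigma(x)\le t\}$ is a section of $\sigma$. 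Your closing diagnosis is also sound: join-preservation alone cannot yield $R_1$ (an atom could be sent to a flat of height $2$), so the $\downarrow$ axiom is genuinely needed, and gradedness of $L_M$ (comparable flats of equal height coincide) is what forces $\sigma(\bar{A})=\sigma(A)$. One small streamlining: in that last step you can apply the equal-height argument directly to $C=\bar{A}$ rather than routing through atoms $x\le C$, since $\bar{A}\ge A$ and $\rho(\bar{A})=\rho(A)$ give $\sigma(\bar{A})=\sigma(A)$ at once, hence $\bar{A}\le\phi(\sigma(A))$.
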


Given a strong map $\sigma: L_{q,k} \twoheadrightarrow L_M$ for a $q$-matroid $M$, we may precede $\sigma$ by the strong map $\pi:B(P) \rightarrow L_{q,k}$, where $P$ is the set of points of the projective geometry $L_{q,k}$, and $B(P)$ the Boolean algebra of subsets of $P$. We obtain a composite strong map, also onto $L_M$,  having the same geometric lattice as image, and thus a classical matroid $C(L_M)$ with the same geometric lattice of closed sets. We will refer to this matroid and the {\it associated classical matroid} of $L_M$. (See [JP], bottom of page 5.)

As a simple example, consider the uniform $q$-matroid $U_{2,1}$ (rank 2, nullity 1) on the Fano geometry $L_{2,3}$. There are seven closed flats of rank 1 in $U_{2,1}$. The image is a seven-point line, which is not binary, that is, it is not representable over the prime field $GF(2)$.

So the question ``Which matroids $M$ are $q$-matroids?'', in the sense that $M$ is the associated classical matroid $C(M_q)$ of some $q$-matroid $M_q$, is in some sense complementary to the question ``Which matroids $M$ are $q$-representable?''. $q$-Matroids only become a minor-closed class if we use a definition of ``minor'' that is natural for  their support lattices $L=L_{q,k}$, that is, restriction  to an interval of $L$, such an interval also being  the lattice of flats of a projective geometry. But this set of minors is a {\it proper} subset of the set of minors of the associated classical matroid $C(M_q)$. 

\section{Axioms in terms of bi-colorings of support lattices}

In this paper we find it convenient to use yet one more cryptomorphic system of axioms for matroids, one which extends with slight alteration to $q$-matroids. Let $L$ be a support lattice of finite height $h$. The elements of $L$ we call {\it flats}. Support flats of height 1 are its {\it atoms}. The ``zero'', or bottom flat of the support lattice we denote $\bo$; the top flat of height $\h{L}$ we denote $\tp$.

The intervals of length 1 we call {\it covers}. Intervals of length 2 we call {\it diamonds}. A convenient axiomatization of $q$-matroids on a support lattice $L$ is expressed in terms of a bi-coloring (red, green) of the set of coverings in the lattice $L$. A bi-coloring is {\it matroidal} if and only if each diamond is of one of four types:
\vbox{
\begin{enumerate}
\item[One --] All coverings are red
\item[Mixed --] Exactly one lower covering is green, and the covering above it is the only upper red covering 
\item[Prime --] All lower coverings are red, all upper coverings are green
\item[Zero --] All coverings are green
\end{enumerate}
}

\begin{figure}[h] 
\centering\includegraphics[scale=.64]{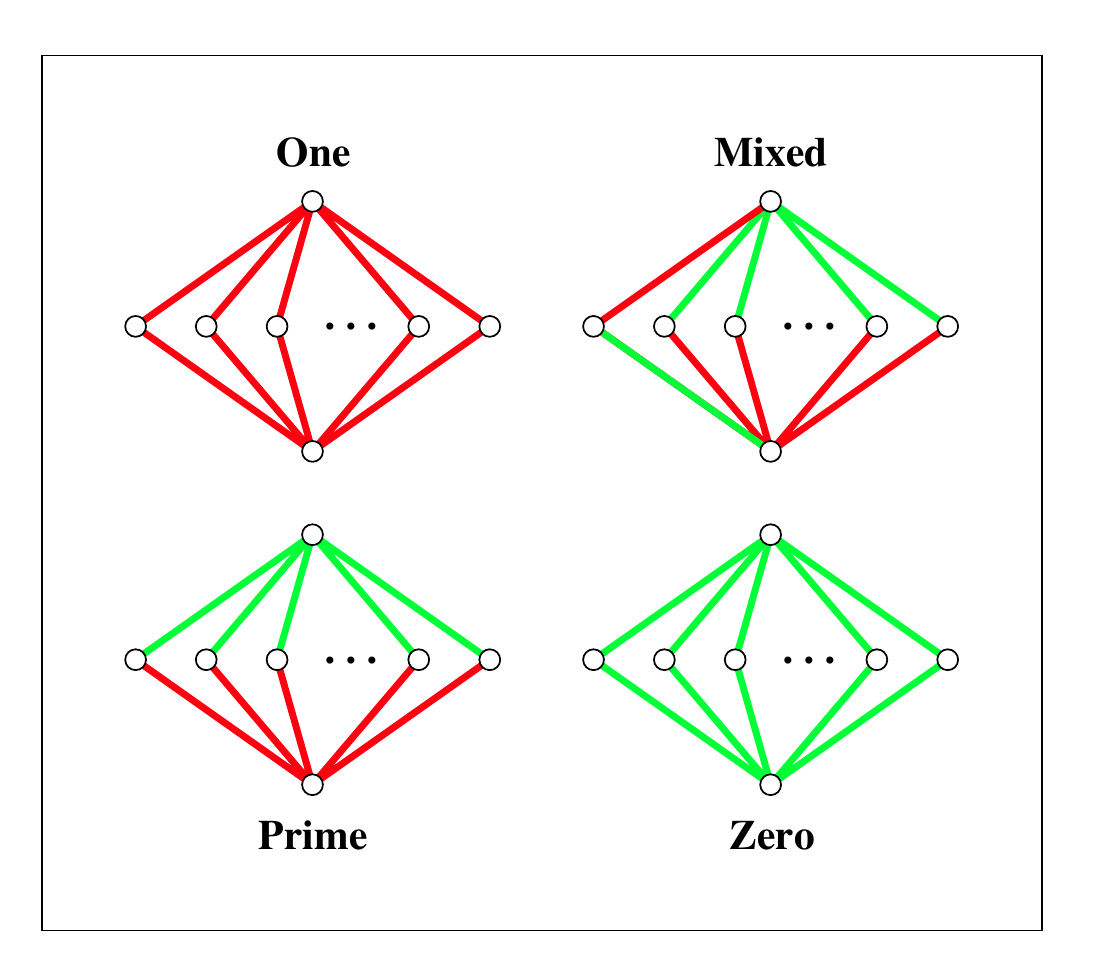} 
\caption{The four diamonds in a $q$-matroid.}
\end{figure}

If $x$ and $y$ are two support flats, let $z = x\meet y$ and $w = x\join y$. Since the support lattice is modular, the following are equivalent:
$$ y \text{\;covers\;}z = x\meet y\quad\iff\quad w = x\join y\text{\;covers\;} x.$$
In this case we say that the cover
$[z,y]$ {\it projects upward} to $[x,w]$, and that the cover
$[x,w]$ {\it projects downward} to $[z,y]$.
In such a case, let 
$$z = x\meet y = x_0\;<\;x_1\;<\;\dots \;<x_{n-1}\;<\;x_n = x$$
 be a maximal chain of flats from $z$ to $x$. Then setting $y_i = y\join x_i$ for $i = 0,\dots,n$,
$$y = y_0\;<\;y_1\;<\;\dots \;<y_{n-1}\;<\;y_n = x\join y = w$$
is a maximal chain from $y$ to $w$.  Dually, if
$$y =  y_0\;<\;y_1\;<\;\dots \;<y_{n-1}\;<\;y_n = x\join y$$
 is a maximal chain of flats from $y$ to $x\join y$. Then setting $x_i = x\meet y_i$ for $i = 0,\dots,n$,
$$x\meet y = x_0\;<\;x_1\;<\;\dots \;<x_{n-1}\;<\;x_n = x$$
is a maximal chain from $x\meet y$ to $x$. In both cases, $y_i$ covers $x_i$ for all indices $i$ in the range $0\leq i \leq n$. In this case we say these $2(n+1)$ flats $x_i,y_i$ form a {\it ladder} from covering $[z,y]$ to  covering $[x,w]$,  with $[z,y]$ projecting upward to $[x,w]$, and 
$[x,w]$ projecting downward to $[z,y]$.

The axioms for the matroidal bi-coloring of the support lattice imply the following two logically equivalent statements, but they do not suffice to define matroids: 
\vbox{
\begin{enumerate}
\item[(1.)\quad] If a covering $[z,y]$ projects upward to a covering $[x,w]$, and $[z,y]$ is green, so is $[x,w]$;
\item[(2.)\quad] If a covering $[x,w]$ projects downward to a covering $[z,y]$, and $[x,w]$ is red, so is $[z,y]$.
\end{enumerate}
}

To prove that the rank axioms and bi-coloring axioms for $q$-matroids are cryptomorphic, we will need to know that for any $q$-matroid bicoloring and for every interval $[x,y]$ in $L$, the number of red coverings in a maximal chain from $x$ to $y$ is invariant, independent of the choice of path.

\begin{proposition} 
For a $q$-matroid defined in terms of a bicoloring of a support lattice $L$, the value $\rho(x,y)$ is well-defined, independent of choice of maximal chain, for any ordered pair of flats $x\leq y$ in $L$
\end{proposition}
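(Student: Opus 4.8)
The plan is to set $\rho(x,y)$ equal to the number of red coverings encountered along any maximal chain from $x$ to $y$, and to establish invariance in two stages. First I would reduce, using the modularity of $L$, to the case of a single diamond (an interval of height $2$); then I would verify directly, via the four-type classification of a matroidal bi-coloring, that inside any diamond every choice of middle element yields a path carrying the same number of red coverings. Combining these gives that the count cannot change as we pass from one maximal chain to another.

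For the reduction, I would argue by induction on $m = \h{y}-\h{x}$ that any two maximal chains $x = z_0 < z_1 < \dots < z_m = y$ and $x = z_0' < \dots < z_m' = y$ are linked by a finite sequence of elementary moves, each replacing one interior element $z_i$ by another element inside a diamond $[z_{i-1},z_{i+1}]$. The cases $m\le 1$ are immediate. For the inductive step, if the two chains share their first covering ($z_1 = z_1'$) I recurse on the interval $[z_1,y]$. Otherwise $z_1$ and $z_1'$ are distinct coverings of $x$, so $z_1\meet z_1' = x$, and by the covering property of the modular lattice $L$ the join $w = z_1\join z_1'$ covers both $z_1$ and $z_1'$; thus $[x,w]$ is a diamond. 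Fixing any maximal chain from $w$ to $y$ and prepending the two routes $x<z_1<w$ and $x<z_1'<w$ produces two chains differing by a single diamond move, each agreeing with the original chain (resp.\ with the comparison chain) on its first covering; induction on the shorter intervals $[z_1,y]$ and $[z_1',y]$ then closes the argument.

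It remains to check that one diamond move preserves the red count, i.e.\ that for a diamond $[c,d]$ and any two middle elements $m,m'$ the paths $c<m<d$ and $c<m'<d$ contain equally many red coverings. Here I would simply run through the four permitted types. In a \emph{One} diamond both coverings of every path are red (count $2$); in a \emph{Zero} diamond all are green (count $0$); in a \emph{Prime} diamond every path has a red lower covering and a green upper covering (count $1$). The only case needing care is \emph{Mixed}: the unique path through the green lower covering has the red covering lying directly above it, while every other path has a red lower covering and a green upper covering, so all paths again carry exactly one red covering. Hence every elementary move leaves the number of red coverings fixed, and chaining the moves supplied by the reduction shows $\rho(x,y)$ is independent of the maximal chain.

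The single genuinely load-bearing step is the \emph{Mixed} case of the diamond check, where the green-below/red-above path must exactly compensate the red-below/green-above paths; the other three types are manifestly path-independent. The reduction to diamonds is standard Jordan--H\"older bookkeeping and uses nothing beyond modularity of $L$ (concretely, the covering property that $z_1\join z_1'$ covers both $z_1$ and $z_1'$), so I anticipate no real obstacle there.
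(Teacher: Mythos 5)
Your proof is correct and takes essentially the same route as the paper's: the same Jordan--H\"older-style induction in which the two initial coverings $z_1,z_1'$ of $x$ are joined (modularity giving that $z_1\join z_1'$ covers both, hence a diamond) and the argument recurses on the shorter intervals $[z_1,y]$ and $[z_1',y]$. The only difference is organizational --- you factor the argument into a purely lattice-theoretic chain-connectivity step plus an explicit case check of the four diamond types, whereas the paper tracks the red-covering counts directly inside the induction and leaves the diamond verification implicit in the phrase ``by the axiomatic restriction on bi-coloring of diamonds.''
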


\begin{proof}
It is clear that the number of red coverings in a maximal chain of flats from $x$ to $y$ is invariant when $\lambda(y)-\lambda(x)$ is equal to 0 or 1, since there is then only one maximal chain from $x$ to $y$. Under the induction assumption that this is true for all intervals $[c,d]$ of length less than $k$, let $[x,y]$ be an interval of length $k$ in $L$, and let
$$
\text{chain}\; P:\;\;c = x_0,x_1, \dots,x_k=d\text{\quad and\quad}\text{chain}\; S:\;\;c = y_0,y_1, \dots,y_k=d
$$
be any two maximal chains from $c$ to $d$. Then let $z_2$ be the support flat $x_1\join y_1$, and let $z_2,z_3,\dots,z_k=d$
be a maximal chain from $z_2$ to $d$, so that 
$$\text{chain}\; Q:\;\;c,x_1,z_2,z_3,\dots, z_k \text{\quad and \quad}\text{chain}\; R:\;\; c,y_1,z_2,z_3,\dots, z_k$$
are also maximal chains from $c$ to $d$. By the induction hypothesis, chains $P$ and $Q$ both have 
$$\rho(c,x_1) + \rho(x_1,d)\;\;=\;\;\rho(c,x_1) + \rho(x_1,z_2) + \rho(z_2,d)$$
red coverings, and chains $R$ and $S$ both have 
$$\rho(c,y_1) + \rho(y_1,d)\;\;=\;\;\rho(c,y_1) + \rho(y_1,z_2) + \rho(z_2,d)$$
red coverings. By the axiomatic restriction on bi-coloring of diamonds,  
$$\rho(c,x_1) + \rho(x_1,z_2) \;\;=\;\;\rho(c,y_1) + \rho(y_1,z_2),$$
so chains $Q$ and $R$ have the same number of red coverings. Therefore chains $P$ and $S$ have the same number of red coverings.
\end{proof}

So the concept of {\it rank} $\rho(x)$ of a support flat $x$ is well-defined as the number of red coverings in any  maximal chain from $\bo$ to $x$.

\begin{figure}[h] 
\centering\includegraphics[scale=.64]{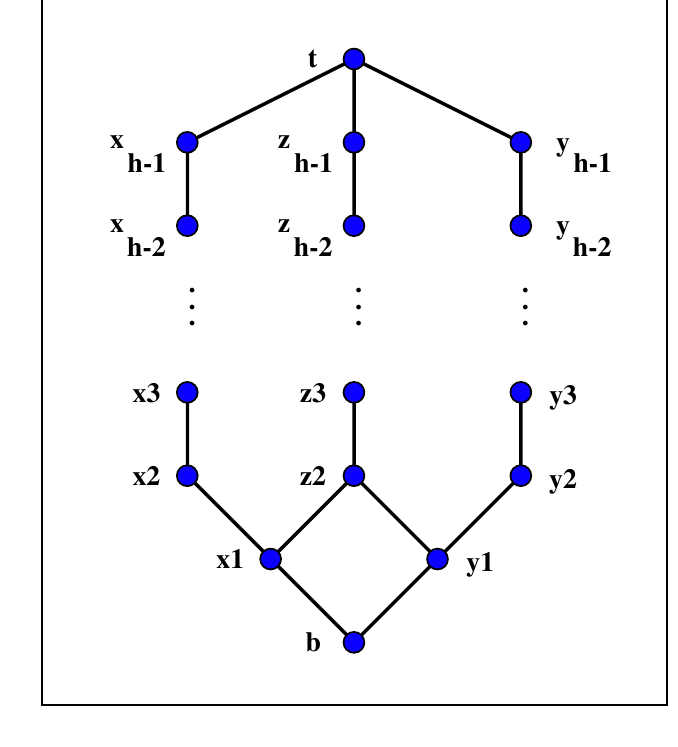} 
\caption{To show by recursion that the rank function is well-defined.}
\end{figure}

\begin{theorem} 
The axiom systems for $q$-matroids in terms of rank and in terms of matroidal bi-colorings are cryptomorphic. 
\end{theorem}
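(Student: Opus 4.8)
The plan is to exhibit a bijection between matroidal bi-colorings of $L$ and rank functions satisfying $R_1$--$R_3$, and then to check that the two natural passages between them are mutually inverse. In the direction from bi-colorings to rank, I set $\rho(A) = \rho(\bo, A)$, the number of red coverings on a maximal chain from $\bo$ to $A$, which is well-defined by the preceding Proposition. Then $R_1$ is immediate, since such a chain has length $\h{A}$ and $\rho(A)$ counts only some of its coverings. Monotonicity $R_2$ follows by extending a maximal chain from $\bo$ to $A$ onward to $B$ when $A\subseteq B$: by well-definedness $\rho(B) = \rho(A) + \rho(A,B) \ge \rho(A)$.

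The substantive point in this direction is submodularity $R_3$, and here I would combine the ladder construction with implication (2). Given flats $A,B$, put $z = A\meet B$ and $w = A\join B$, choose a maximal chain $z = y_0 < y_1 < \dots < y_n = B$, and set $x_i = A\join y_i$; modularity guarantees that $A = x_0 < x_1 < \dots < x_n = w$ is a maximal chain and that each lower cover $[y_{i-1}, y_i]$ projects upward to $[x_{i-1}, x_i]$. By implication (2), whenever an upper cover $[x_{i-1}, x_i]$ is red its downward projection $[y_{i-1}, y_i]$ is red as well, so the number of red coverings upstairs is at most the number downstairs: $\rho(A, w) \le \rho(z, B)$. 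Rewriting $\rho(w) = \rho(A) + \rho(A, w)$ and $\rho(B) = \rho(z) + \rho(z, B)$ converts this into $\rho(w) + \rho(z) \le \rho(A) + \rho(B)$, which is exactly $R_3$.

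Conversely, starting from a rank function $\rho$ obeying $R_1$--$R_3$, I color a covering $[x,y]$ red when $\rho(y) = \rho(x)+1$ and green when $\rho(y)=\rho(x)$. First I must check this exhausts all cases, i.e. that $\rho$ increases by at most $1$ across a cover: writing $y = x\join p$ for an atom $p\le y$ with $p\not\le x$, submodularity gives $\rho(y) = \rho(x\join p) \le \rho(x) + \rho(p) - \rho(x\meet p) \le \rho(x)+1$, using $\rho(p)\le\h{p}=1$ from $R_1$. It then remains to verify that every diamond $[a,b]$ is of one of the four admissible types. Splitting on $\rho(b)-\rho(a)\in\{0,1,2\}$, the extreme values force all coverings green (type Zero) or all red (type One) by monotonicity and unit increase. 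The delicate case is $\rho(b)=\rho(a)+1$: a middle flat $m$ contributes a green lower cover precisely when $\rho(m)=\rho(a)$, and I claim at most one middle flat can do so, for if distinct middle flats $m_1,m_2$ both satisfied $\rho(m_i)=\rho(a)$, then $m_1\join m_2 = b$ and $m_1\meet m_2 = a$, and submodularity would yield $\rho(b)+\rho(a)\le 2\rho(a)$, contradicting $\rho(b)=\rho(a)+1$. Hence either no middle flat is green-below (type Prime) or exactly one is (type Mixed).

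Finally I would confirm the two passages are inverse. Coloring-to-rank-to-coloring recovers the original coloring, because on a single cover $[x,y]$ the computed difference $\rho(y)-\rho(x)$ is $1$ or $0$ exactly according as $[x,y]$ was red or green. Rank-to-coloring-to-rank recovers the original rank, because summing the increments $\rho(x_i)-\rho(x_{i-1})$ along a maximal chain from $\bo$ to $A$ telescopes to $\rho(A)-\rho(\bo)$, and $R_1$ forces $\rho(\bo)=0$. I expect the ladder-plus-implication-(2) argument for $R_3$ to be the main obstacle, since it is the one place where the modular geometry of $L$ and the combinatorics of the bi-coloring must be made to interact; the diamond classification, by contrast, reduces cleanly to a single use of submodularity.
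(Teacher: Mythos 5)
Your proof is correct and takes essentially the same route as the paper's: the ladder/projection argument (implication (2)) to get submodularity in the coloring-to-rank direction, and a submodularity-based classification of diamonds in the rank-to-coloring direction. If anything, your write-up is more complete than the paper's, which asserts unit rank increase across covers without proof, omits the check that the two passages are mutually inverse, and phrases the mixed/prime dichotomy as ``$1$ or $q+1$ green coverings'' (a miscount, since both types have $q+1$ green coverings in total), whereas your ``at most one green lower cover, by submodularity'' is the correct and clean statement.
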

\begin{proof}
Assume that we have a $q$-matroidal bi-coloring of a support lattice $L$. For any support flat $x$, let the value $\rho(x)$ be the number of red coverings in any maximal chain of support flats from $\bo$ to $x$. This value $\rho(x)$ is clearly bounded by height, and increasing, so axioms $R_1$ and $R_2$ hold. Now let $x$ and $y$ be arbitrary support flats in $L$. Let $z=x\meet y$ and $w=x\join y$, and let $z= x_0,x_1,\dots x_m=x$ be a maximal chain from $x\meet y$ to $x$, so 
setting $y_i = y\join x_i$ for $0\leq i\leq m$ we find that the covering $[x_i,x_{i+1}]$ projects upward to the covering $[y_i,y_{i+1}]$
for all $i$ in the interval $0\leq i < m$, so the number of red coverings in the interval $[y,x\join y]$ is bounded above by the number of red coverings in the interval $[x\meet y,x]$. Thus 
$$\rho(x\join y)\;-\;\rho(y)\;\;\leq\;\;\rho(x)\;-\;\rho(x\meet y),$$
and the proposed rank function $\rho$, defined in terms of the bi-coloring of the support lattice, is semi-modular.

For the converse, assume that $M$ is a $q$-matroid defined via a rank function $\rho$ obeying axioms $R_1,R_2,R_3$. Bi-color the support lattice $L$ of $M$, making red all coverings along which there is an increase in rank. Let $[x,z]$ be an interval of height 2 in $L$ (a ``diamond''). Let $d=\rho(z)-\rho(x)$. Since $\rho$ is bounded by height and increases by at most 1 on each covering, $0\leq d\leq 2$. If $d=0$ or $d=2$, the diamond is of type ``zero'' or ``one'', respectively. For $d=1$, since $\rho$ is semimodular, the number of green coverings $[x,y]$ with $x\leq y\leq z$ is either $1$ or $q+1$, so the diamond in question is of type ``mixed'' or ``prime'', respectively.
\end{proof} 


 Jurrius and Pellikaan  proposed a cryptomorphic axiom system for $q$-matroids in terms of  {\it independent} flats, that is, in terms of support flats $A$ for which $\rho(A) = h(A)$. 
Their axioms:
\begin{enumerate}
\item[$I_1$\quad\quad] The zero flat is independent.
\item[$I_2$\quad\quad] Any subflat of an independent flat is independent.
\item[$I_3$\quad\quad] If $\h{A}>\h{B}$ for independent flats $A,B$, then there is an atom $a$ in $A$ but not in $B$ such that $B\join a$ is independent.
\item[$I_4$\quad\quad] For support flats $A,B$ with, respectively, maximal independent subspaces $I,J$, then there is, within $I\join J$, a maximal independent subspace of $A\join B$.
\end{enumerate}
Below, we will comment on the axiom $I_4$, which is not required in the corresponding definition of classical matroids.

Some familiarity with these structures can be gained by studying the following drawing (Figure 3) of the eight possible $q$-matroids definable on the support lattice of  height  3 over the two-element field $GF(2)$.
The underlying geometric figure is the Fano matroid, with 7 points and 7 lines, one line drawn as that inner circle. The outer circle stands for the entire projective plane, the top flat in the lattice $L_{2,3}$. All independent flats are drawn in blue, dependent flats in yellow.


\begin{figure}[h] 
\centering\includegraphics[scale=.64]{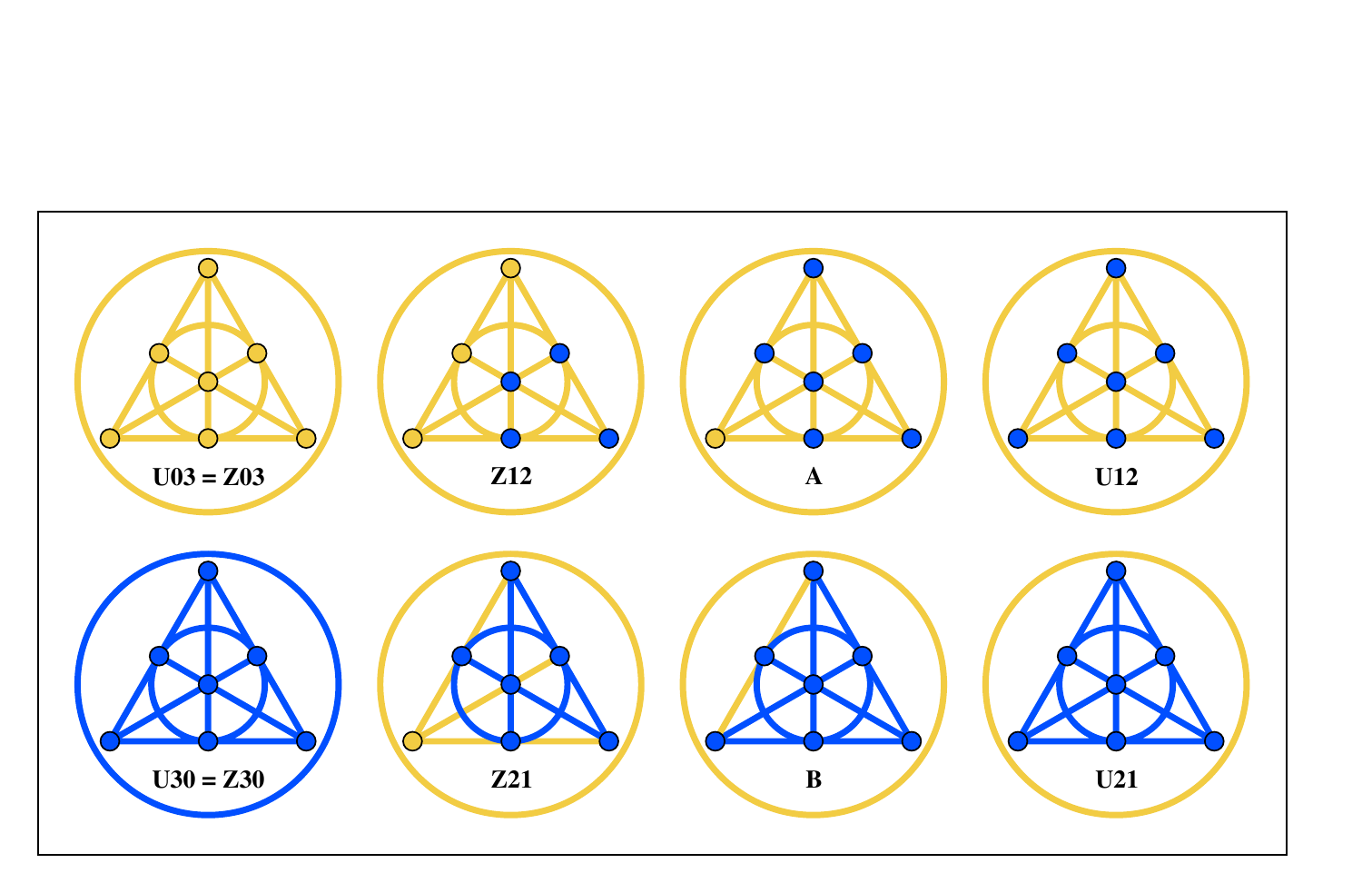} 
\caption{The eight $q$-matroids definable on support lattice $L_{2,3}$, up to isomorphism. Independent = blue, dependent = yellow.}
\end{figure}

 Axiom $I_4$ in the list of  axioms for independent support flats in a $q$-matroid is not
needed in the corresponding axiomatization of classical matroids. Why is it necessary for $q$-matroids, as determined by a bi-coloring? The diagram in Figure 4 shows a support lattice interval of height 2, for $q\geq 2$, which obeys axioms $I_1,I_2,I_3$, but  which is not allowed in a $q$-matroid. It is excluded by axiom $I_4$, since the two atoms on the left have the zero flat as maximal independent subspace, but their join, the top flat, has no maximal independent subspace within the join of the zero flat with itself. This problem does not occur in classical matroids.

\begin{figure}[h] 
\centering\includegraphics[scale=.64]{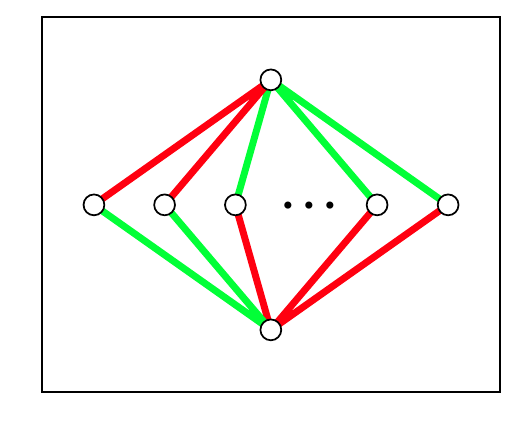} 
\caption{A non-matroidal diamond.}
\end{figure}


\section{Minors of $q$-matroids}


For any two support flats $z,w$ such that $z\leq w$ in $L$, the restriction of the matroidal bi-coloring
 $M$ to the lattice interval $[z,w]$ satisfies the axioms for a matroidal bi-coloring. The resulting $q$-matroid is called a {\it minor} of $M$, denoted $M|_{[z,w]}$. Two special cases of  particular interest:
a minor on a lower interval $[\bo,x]$, $\bo$ being the bottom element of the support lattice, is called the {\it restriction to}  flat $x$. A minor on an upper interval $[x,\tp]$, $\tp$ being the top element of the support lattice, is called the {\it contraction by}  flat $x$.

The {\it uniform} $q$ matroid of rank $\rho$, nullity $\nu$ on a support lattice $L$ of height $h=\rho+\nu$ is given by the bi-coloring that is red on all coverings $[x,y]$ for flats $y$ of height $\h{y}\leq r$, and green on all higher coverings. We take the liberty of using an unconventional notation $U_{\rho,\nu}$ for this uniform matroid \footnote{The conventional notation $U_{\rho,\rho + \nu}$ would make many of our subsequent formulations be more difficult to digest, particularly with respect to our notation for prime-free $q$-matroids.}. Note that in a uniform $q$-matroid, no  diamonds are ``mixed''. 

At the other extreme from uniform matroids, we find those $q$-matroids that have no ``prime'' diamonds. On a given support lattice $L$ there is, up to isomorphism, a doubly-indexed family of {\it prime-free} $q$-matroids that we denote $Z_{i,j}$, of rank $i$ and nullity $j$. 

\begin{figure}[h] 
\centering\includegraphics[scale=.64]{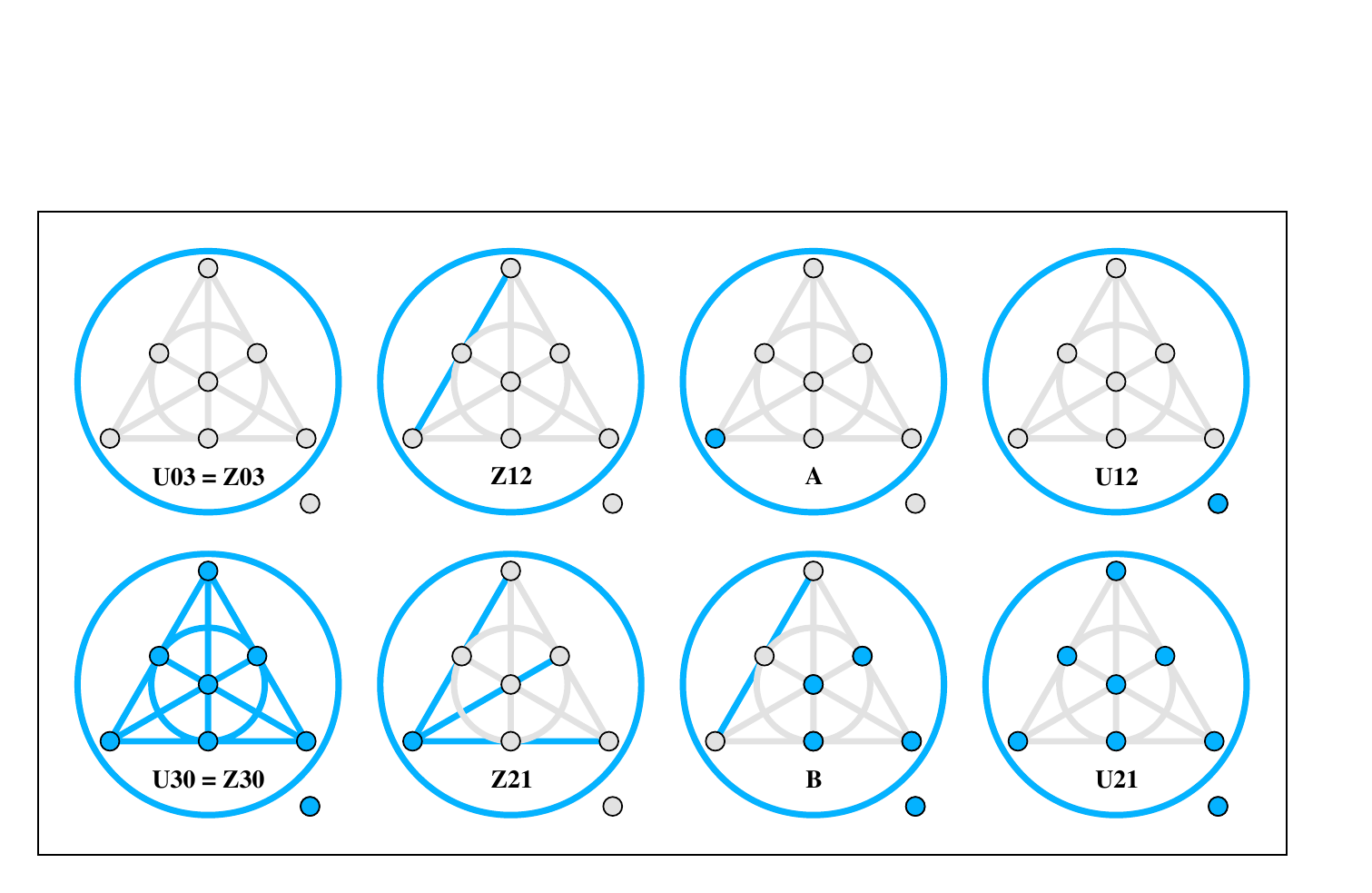} 
\caption{The closed flats, marked in  blue, for eight $q$-matroids defineable on support lattice $L_{2,3}$, up to isomorphism. The small circle, offset lower right in each case, marks the zero subspace of $L$.}
\end{figure}

A support flat $x$ in a $q$-matroid $M$ is {\it closed}  if and only if for all support flats $y$ covering $x$, the covering $[x,y]$ is red. A closed flat $x$ is also called a {\it flat of $M$}, despite the likely confusion in terminology.

A support flat $x$ in a $q$-matroid $M$ is {\it open}  if and only if for all support flats $z$ covered by  $x$, the covering $[z,x]$ is green. 

A flat $x$ that is both closed and open is called {\it clopen}. Each  prime-free matroid of nullity $j$ has a {\it unique} clopen flat of height $j$, nullity $j$, and rank $0$. Its bi-coloring is completely determined by the choice of that clopen flat. (See Proposition 3, below.)
                                                                                                                                                                                                                                                                
A {\it circuit} in a $q$-matroid $M$ is defined to be a support flat $c$ such that the corresponding restriction $M|_{[\bo,c]}$ is uniform of rank $\h{c}-1$. The circuits of a $q$-matroid are its minimal proper (not equal to the bottom flat $\bo$) open flats. They have nullity 1.

A {\it copoint} in a  $q$-matroid $M$ is defined to be a support flat $d$ such that the corresponding contraction $M|_{[d,\tp]}$ is uniform of rank $1$. The copoints of a $q$-matroid are its maximal proper (not equal to the top flat $\tp$) closed flats.

A support flat $x$ in a $q$-matroid $M$ is a {\it spanning flat}  if and only if for all support flats $y$ covering $x$, the covering $[x,y]$ is green.

A support flat $x$ in a $q$-matroid $M$ is {\it independent}  if and only if for all support flats $z$ covered by  $x$, the covering $[z,x]$ is red. 

A flat $x$ that is both independent and spanning is called a {\it basis}.

The {\it dual} $M^*$ of a $q$-matroid $M$ on a support lattice $L$ is obtained inverting the bi-colored support lattice for $M$, then interchanging the colors red and green. We say simply that the support lattice for the dual $q$-matroid is the dual lattice $L^{opp}$, its points being the copoints of $L$, that is, the hyperplanes of the associated projective geometry.

\begin{proposition} 
In a modular lattice $L$, for any lattice element $z$ and any covering pair $[x,y]$, exactly one of the following statements is true:
$$\begin{array}{c}
y\join z \text{ covers } x\join z \text{ and } y\meet z\;=\;x\meet z\\
y\join z\;=\;x\join z \text{ and } y\meet z \text{ covers } x\meet z\\
\end{array}$$
\end{proposition}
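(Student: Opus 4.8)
The plan is to exploit the single degree of freedom provided by the cover $[x,y]$. The key observation is that the element $x\join(y\meet z)$ lies between $x$ and $y$: indeed $x\leq x\join(y\meet z)$ trivially, and $y\meet z\leq y$ together with $x\leq y$ gives $x\join(y\meet z)\leq y$. Because $[x,y]$ is a cover, this forces either $x\join(y\meet z)=x$ or $x\join(y\meet z)=y$, and these two possibilities will produce exactly the two alternatives in the statement. Throughout I would lean on the standard isomorphism of transposed intervals in a modular lattice: for any $a,b$ the maps $t\mapsto t\join b$ and $s\mapsto s\meet a$ are mutually inverse lattice isomorphisms between $[a\meet b,\,a]$ and $[b,\,a\join b]$; in particular they carry covers to covers.

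First case: suppose $x\join(y\meet z)=x$, i.e.\ $y\meet z\leq x$. Then $y\meet z\leq x\meet z$, and since $x\leq y$ already gives $x\meet z\leq y\meet z$, I would conclude $x\meet z=y\meet z$, the equality half of the first alternative. For the covering half I would apply the transposed-interval isomorphism with $a=y$ and $b=x\join z$: here $a\join b=y\join z$ and, by the modular law (using $x\leq y$), $a\meet b=y\meet(x\join z)=x\join(y\meet z)=x$. Hence $[x,y]=[a\meet b,\,a]\cong[b,\,a\join b]=[x\join z,\,y\join z]$, so $y\join z$ covers $x\join z$.

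Second case: suppose $x\join(y\meet z)=y$. Joining with $z$ gives $y\join z=x\join(y\meet z)\join z=x\join z$, the equality half of the second alternative. For the covering half I would use the isomorphism with $a=z$, $b=x$, namely $[x\meet z,\,z]\cong[x,\,x\join z]$ via $t\mapsto t\join x$ with inverse $s\mapsto s\meet z$. Under the forward map $x\meet z\mapsto(x\meet z)\join x=x$ and $y\meet z\mapsto(y\meet z)\join x=y$, while $y$ covers $x$ inside $[x,\,x\join z]$ because $y=x\join(y\meet z)\leq x\join z$ places $y$ in that interval and $[x,y]$ is a cover of $L$. Transporting this cover back through the inverse isomorphism yields that $y\meet z$ covers $x\meet z$.

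Finally, exclusivity is automatic: the first alternative asserts $x\join z<y\join z$ whereas the second asserts $x\join z=y\join z$, so the two cannot hold simultaneously, and the case analysis above shows at least one always does. I expect the main obstacle to be purely bookkeeping, namely correctly identifying the endpoints $a\meet b$ and $a\join b$ of the transposed intervals via the modular law, and checking that the isomorphism matches $x$ with $x\meet z$ (resp.\ $x\join z$) and $y$ with $y\meet z$ (resp.\ $y\join z$) so that the cover relation is genuinely transferred; there is no analytic difficulty once the dichotomy coming from $[x,y]$ is in hand.
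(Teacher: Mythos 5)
Your proof is correct and follows essentially the same route as the paper's: both rest on the modular transposition principle applied to the cover $[x,y]$, and your dichotomy $x\join(y\meet z)\in\{x,y\}$ is equivalent, via the modular law, to the paper's case split on whether $y\leq x\join z$ and whether $y\meet z\leq x$. The only difference is organizational --- the paper first shows at least one covering occurs and then that each covering forces the complementary equality, while you establish each full alternative inside its case and obtain exclusivity from the incompatible conclusions.
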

\begin{proof}
In a modular lattice $L$, for any element $z\in L$ and any covering $[x,y]$, it is not the case that both $y\leq x\join z$ and $x\geq y\meet z$, so either $y\join z$ covers $x\join z$ or $y\meet z$ covers $x\meet z$. But only one of these two conclusions can hold, since if $y\join z$ covers $x\join z$ then $y\not\leq x\join z$, so $x\leq y\meet (x\join z)< y$ and $x\;=\;y\meet(x\join z)$ so $x\meet z\;=\;y\meet (x\join z)\meet z\;=\;y\meet z$. Dually, if $y\meet z$ covers $x\meet z$, then $x\join z\;=\;y\join z$.
\end{proof}

\begin{proposition} 
If a $q$-matroid $M$ on a support lattice $L$ has a clopen support flat $z$, then the bi-coloring
is determined as follows: any covering $[x,y]$ in $L$ is green if and only if $y\meet z$ covers $x\meet z$,
and is red  if and only if $y\join z$ covers $x\join z$. In such a $q$-matroid $M$, the flat $z$ is the only clopen flat,  and the complements of $z$ in $L$ are exactly the bases of $M$. 
\end{proposition}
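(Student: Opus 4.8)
\section*{Proof proposal}

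The plan is to prove the color characterization first; the two auxiliary assertions (uniqueness of $z$ and the description of bases) then follow by a short rank computation. The engine is Proposition 2: for the clopen flat $z$ and any covering $[x,y]$, exactly one of two cases holds --- \emph{Case A}, where $y\join z$ covers $x\join z$ and $y\meet z=x\meet z$, or \emph{Case B}, where $y\meet z$ covers $x\meet z$ and $y\join z=x\join z$. Since each covering is also exactly one of red or green, it suffices to prove the implications ``Case B $\Rightarrow$ green'' and ``Case A $\Rightarrow$ red''; the stated biconditionals then drop out. First I would record two modular-lattice computations. In Case B the pair of flats $x$ and $y\meet z$ has meet $x\meet z$ and join $y$, so $[x\meet z,\,y\meet z]$ projects upward to $[x,y]$, and both endpoints of the \emph{lower} covering lie under $z$. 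Dually, in Case A the pair $y$ and $x\join z$ has meet $x$ and join $y\join z$, so $[x,y]$ projects upward to $[x\join z,\,y\join z]$, whose endpoints both lie above $z$.

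Granting these, the characterization reduces to two \emph{spreading} lemmas: every covering with both endpoints $\le z$ is green, and every covering with both endpoints $\ge z$ is red. Indeed, by statement (1) a green lower covering forces its upward projection green, so the first lemma and the Case-B projection give ``Case B $\Rightarrow$ green''; by statement (2) the red upper covering $[x\join z,\,y\join z]$ projects downward to $[x,y]$ and forces it red, so the second lemma and the Case-A projection give ``Case A $\Rightarrow$ red''. I would prove the first spreading lemma by downward induction on the depth $\h{z}-\h{b}$ of a covering $[a,b]$ with $b\le z$. The base case $b=z$ is exactly the openness of $z$: every coatom of $z$ gives a green covering. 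For the inductive step, choose $b'$ with $b\lessdot b'\le z$ and examine the diamond $[a,b']$; each \emph{upper} covering of this diamond has the form $[c,b']$ with $b'\le z$ of strictly smaller depth $\h{z}-\h{b'}$, hence is green by the induction hypothesis. A diamond all of whose upper coverings are green is of type ``Zero'' or ``Prime'', and once ``Prime'' is excluded it must be ``Zero'', so in particular the lower covering $[a,b]$ is green. The second lemma is the order-dual argument, using the closedness of $z$ and the diamond types ``One'' and ``Prime''.

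The step I expect to be the genuine obstacle is precisely this exclusion of ``Prime'' diamonds in the two intervals $[\bo,z]$ and $[z,\tp]$. Clopenness of $z$ alone does not suffice: for instance $\bo$ is clopen in $U_{1,1}$, yet the coverings above it are not all red, so the second spreading lemma fails there. What makes the induction close is that the matroids under consideration have no prime diamonds, and I would invoke that hypothesis at exactly the two points where a diamond with all-green (resp.\ all-red) coverings on one side must be declared ``Zero'' (resp.\ ``One'') rather than ``Prime''. This is where the real content sits; everything surrounding it is bookkeeping with Proposition 2 and the two projection consequences.

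Finally, with the characterization established, telescoping the colors along a maximal chain from $\bo$ to $x$ yields the closed formula
\[
\rho(x)\beq \h{x}-\h{x\meet z},
\]
since the green coverings are exactly those along which $\,\cdot\meet z\,$ climbs. From this, $\rho(x)=0$ if and only if $x\le z$, so $z$ is the maximal flat of rank $0$; the same characterization applied to any clopen flat shows it too is the maximal rank-$0$ flat, hence equals $z$, giving uniqueness. For the bases, a complement $c$ of $z$ satisfies $c\meet z=\bo$ and $c\join z=\tp$, whence $\rho(c)=\h{c}$ (so $c$ is independent) and $\rho(c)=\h{\tp}-\h{z}=\rho(\tp)$ (so $c$ is spanning), i.e.\ $c$ is a basis; conversely, reading the equalities $\rho(b)=\h{b}$ and $\rho(b)=\rho(\tp)$ back through the formula forces $b\meet z=\bo$ and $b\join z=\tp$, so every basis complements $z$.
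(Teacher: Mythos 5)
Your proposal is correct (under the prime-free hypothesis you make explicit), and for the coloring characterization it follows the same skeleton as the paper: coverings lying below $z$ are green, coverings lying above $z$ are red, and Proposition~2 together with the projection properties (1) and (2) transfers this to every covering of $L$. The essential difference is that the paper never proves the two ``spreading lemmas'': its proof simply opens with ``Since all coverings $[s,t]$ with $t\leq z$ are green\dots'' and ``Since all coverings $[s,t]$ with $z\leq s$ are red\dots'', as if these followed from clopenness of $z$ alone. Your identification of this as the crux is exactly right, and your counterexample shows it is a genuine gap rather than an omitted routine verification: in $U_{1,1}$ both $\bo$ and $\tp$ are clopen (each vacuously on one side), yet the coverings $[a,\tp]$ above atoms are green; so with clopenness as the only hypothesis the coloring claim, the uniqueness claim, and the remark following the proposition (that such an $M$ has only zero, one and mixed diamonds) are all false. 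Prime-freeness must be assumed, and your induction uses it at exactly the right place: a diamond whose upper coverings are all green must be declared ``Zero'' rather than ``Prime''. The paper instead \emph{defines} ``prime-free'' by the existence of a clopen flat in the sentence after the proposition, which leaves the logic circular; your reading is the correct repair.

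For the remaining claims your route differs from the paper's and is more complete. The paper's uniqueness argument is a direct lattice argument: if $x$ is clopen and $c$ is a copoint with $c\not\geq z$, then $[c\meet z,z]$ is green and projects up to $[c,\tp]$, contradicting clopenness of $x$ (via the red spreading lemma applied to $x$ --- again unproved); hence every copoint above $x$ is above $z$, giving $z\leq x$, and dually $x\leq z$. You instead extract the rank formula $\rho(x)=\h{x}-\h{x\meet z}$ and characterize any clopen flat as the maximum flat of rank $0$. Both are sound once the spreading lemmas are in place, but yours buys the third claim for free: the paper's proof stops at uniqueness and never addresses the assertion that the bases are exactly the complements of $z$, whereas your formula gives $\rho(c)=\h{c}$ and $\rho(c)=\rho(\tp)$ precisely when $c\meet z=\bo$ and $c\join z=\tp$. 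One detail to record in a final write-up: the equivalences ``independent iff $\rho(x)=\h{x}$'' and ``spanning iff $\rho(x)=\rho(\tp)$'' are not the paper's definitions (those are stated in terms of the coverings immediately below, respectively above, $x$); deducing them needs the diamond axioms once more, e.g.\ if every covering $[w,x]$ is red and $v\lessdot w\lessdot x$, then $[v,w]$ must also be red, since otherwise the diamond $[v,x]$ would be Mixed with $[w,x]$ its unique red upper covering, forcing some other upper covering of that diamond to be green.
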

\begin{proof}
Let $M$ be a $q$-matroid with clopen flat $z$.
Since all coverings $[s,t]$ with $t\leq z$ are green, so are all coverings $[x,y]$ up to which $[s,t]$ projects.
Since all coverings $[s,t]$ with $z\leq s$ are red, so are all coverings $[x,y]$ down to which $[s,t]$ projects.
If a flat $x$ in $M$ is clopen, let $y$ be a flat covering $x$, and $c$ a copoint of $L$ such that $c\meet y\;=\;x$. If $c\not\ge z$, then $c\meet z$ is covered by $z$, the covering $[c\meet z, z]$ is green,
as is the covering $[c,\tp]$, up to which it projects. This is not the case, since $c\ge x$ and $x$ is clopen.
So $c\ge z$, and every copoint of $L$ above $x$ is also above $z$. This means that $z\le x$. 
Now let $w$ be any flat covered by $x$, and  $p$  any point of $L$ such that $w\join w\;=\;x$.
By an analogous reasoning, if $p\not\le z$, then $p\join z$ covers $z$, the covering $[z,p\join z]$ is red,
as is the covering $[\bo,p]$ down to which it projects. This is not the case, because $p\le x$ and $x$ is open. So $p\le z$, and every point of $L$ beneath $x$ is also beneath $z$, so $x\le z$, and $x=z$.
\end{proof}

We shall call such a $q$-matroid {\it prime-free}, because it has only three types of diamonds: zero, one, and mixed.

\section{Some $q$-notation}

In Henry Cohn's paper [Co], but without all the square brackets, we have

The $q$-integers:
$$n_q = 1 + q + q^2 + ... + q^{n-1}$$

The $q$-factorials:
$${n!}_q = 1_q  2_q   \dots  n_q$$

The $q$ binomial coefficients:
$${\binom {n} {k} }_q \;=\; \frac   {{n!}_q}   {  {(n-k)!}_q  \;  {{k!}_q}  }$$

\section {The rank generating function}

The {\it rank generating function} $\rgf{M}$ of a $q$-matroid $M$ is defined as it is for matroids. It is the polynomial sum, over all  flats $z$ in the support  lattice $L$, of the monomial $x^{\rho(\tp)-\rho(z)}\;y^{\nu(z)}$. (Recall that $\tp$ is the top flat in $L$.) 

For example, the rank generating function $\rgf{B}$ of the $q$-matroid B in Figure 6 is
$$x^2 + xy + 7x + y + 6, \text{\quad with coefficient array\quad}
\begin{matrix}
1&1\\6&7&1\\
\end{matrix}
$$

\begin{figure}[h] 
\centering\includegraphics[scale=.64]{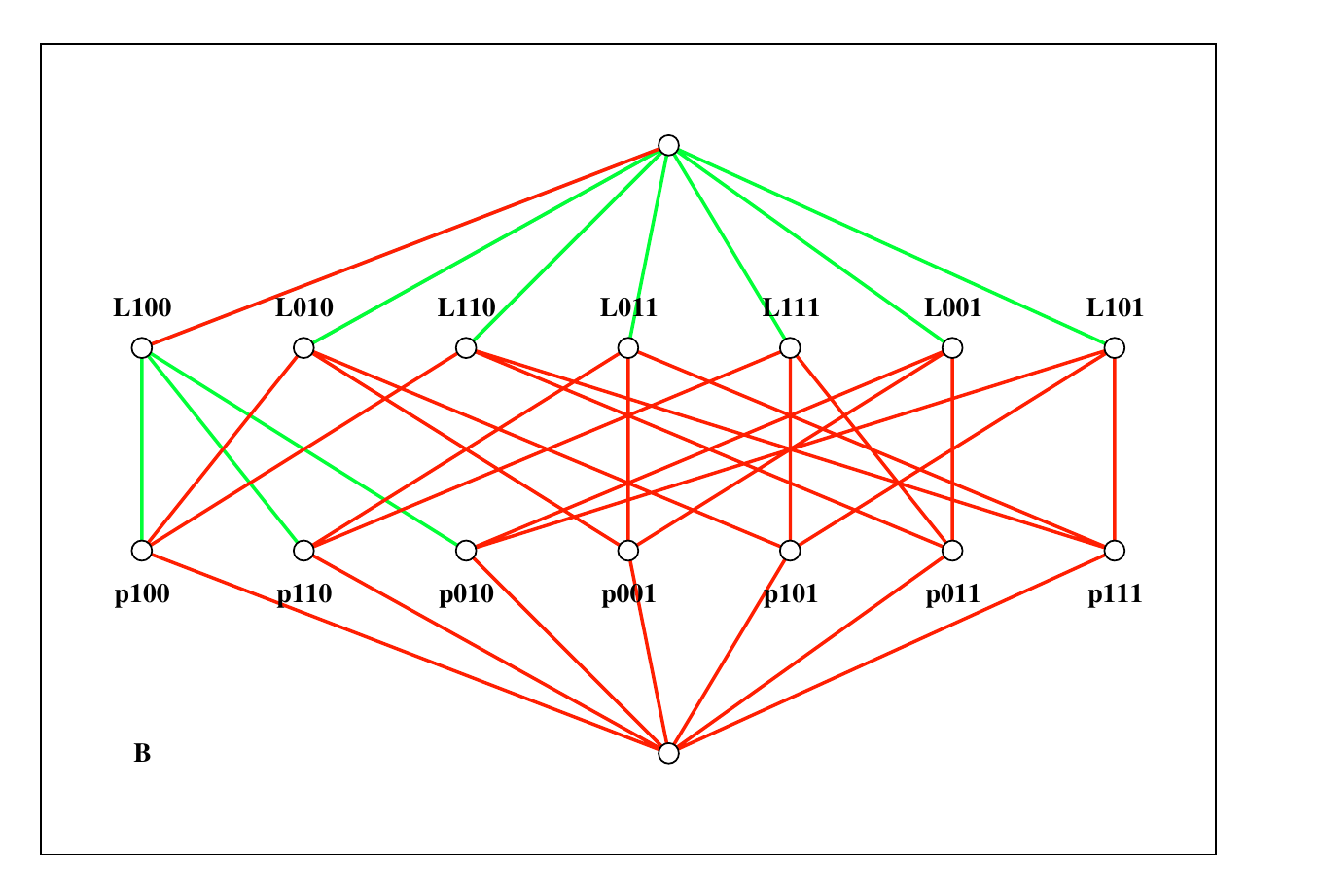} 
\caption{The $q$-matroid B on $L_{2,3}$.}
\end{figure}

In Figure 6 we have labelled each point and line with their Grassmann coordinates. Because $h(L)=3$ and $q=2$, each point will be labelled $p_{a,b,c}$ where $a,b,c$ is a sequence of three binary integers, not all zero. The  join of $A= p_{(a,b,c)}$ and $B= p_{(d,e,f)}$ is the line $A\join B = C$ with coordinates given by the  exterior product $C = L_{(ae-bd,\; af-cd,\; bf-ce)}$, these being the "12", "13", and "23" coordinates of the resulting projective line. For example, $p_{(0,1,0)}\join p_{(1,0,1)}\;=\;L_{(1,0,1)}$. The exterior product
of a point $A= p_{(a,b,c)}$ with a line $D_{(d,e,f)}$ has a single "123" coordinate for the projective plane,
here the top flat $\tp$, equal to $p_1 L_{23} - p_2 L_{13}+ p_3 L_{12}$. This coordinate is non-zero if and only if $p\join D = \tp$, that is, here, if and only if $p$ and $D$ are complementary. If $p<D$, the exterior product is zero, or more precisely, is the zero multiple of $\tp$. Note here that $p_{1,0,0}$ is complementary to a line $D_{(d,e,f)}$ if and only if the coordinate $f=D_{23}$ is non-zero (i.e.: equal to 1). There are four such lines.

\section{Complements in support lattices}

Flats $x$ and $y$ in a support lattice $L$ are {\it complementary} if and only if $x\join y = \tp$ and $x \meet y = \bo$, the top and bottom flats of $L$, respectively. We will show that, for any support lattice $L_{q,h}$, each flat of height $r$ in $L$ has exactly $q^{r(h-r)}$ complements in $L$. Under the assumption that $q=p^n$ for some prime $p$ and integer exponent $n$, so that $L$ is the lattice of flats of a projective geometry over a finite field, we may use the following proof using Grassmann coordinates.

\begin{proposition} 
In a support lattice $L_{q,h}$, each flat $x$ of height $r$ in $L$ has exactly $q^{r(h-r)}$ complements in $L$.
\end{proposition}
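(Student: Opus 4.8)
The plan is to translate the statement into linear algebra and count complements by exhibiting a bijection with a space of linear maps, which displays the exponent $r(h-r)$ directly as a dimension.

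First I would fix notation: a flat of height $r$ is an $r$-dimensional subspace $X$ of the ambient space $V = GF(q)^h$, and a complement is a subspace $Y$ with $X \cap Y = \bo$ and $X + Y = \tp$. By modularity, equivalently the dimension formula $\h{X+Y} + \h{X \cap Y} = \h{X} + \h{Y}$, every complement automatically has height $h - r$. So it suffices to count the $(h-r)$-dimensional subspaces meeting $X$ trivially and spanning $V$ together with $X$.

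Next I would fix one reference complement $Y_0$, obtained by extending a basis $e_1,\dots,e_r$ of $X$ to a basis $e_1,\dots,e_h$ of $V$ and setting $Y_0 = \langle e_{r+1},\dots,e_h\rangle$. For each linear map $\phi\colon Y_0 \to X$ I would form its graph $Y_\phi = \{\, v + \phi(v) : v \in Y_0 \,\}$. Two routine checks show $Y_\phi$ is a subspace of height $h-r$ and is a complement of $X$: if $v + \phi(v) \in X$ then $v = (v+\phi(v)) - \phi(v) \in X \cap Y_0 = \bo$, giving $X \cap Y_\phi = \bo$, and the dimension count then forces $X + Y_\phi = \tp$. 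The assignment $\phi \mapsto Y_\phi$ is the candidate bijection, and since $\operatorname{Hom}(Y_0,X) \cong \operatorname{Hom}(GF(q)^{h-r}, GF(q)^r)$ has exactly $q^{r(h-r)}$ elements, the result follows once this assignment is shown to be a bijection.

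The main work, and the step I expect to be the real obstacle, is surjectivity: showing that every complement $Y$ of $X$ is the graph of a (unique) $\phi$. For this I would use the projection $\pi\colon V \to Y_0$ along $X$, that is, the map killing $X$ and fixing $Y_0$. Because $Y \cap X = \bo$, the restriction $\pi|_Y$ is injective, and as $\h{Y} = \h{Y_0}$ it is an isomorphism $Y \to Y_0$; letting $\psi = (\pi|_Y)^{-1}$, each $\psi(v)$ satisfies $\pi(\psi(v)) = v$, so $\psi(v) - v \in \ker \pi = X$. Setting $\phi(v) = \psi(v) - v$ gives a linear map $Y_0 \to X$ with $Y = Y_\phi$, and $\phi$ is uniquely determined by $Y$ since $Y_\phi = Y_{\phi'}$ forces $\phi = \phi'$. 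Concretely, in Grassmann coordinates this is the normal form in which $Y$ has a unique basis $e_{r+i} + \sum_{j=1}^{r} a_{ij} e_j$, whose $(h-r)\times r$ coefficient matrix $(a_{ij})$ ranges freely over $GF(q)^{(h-r)\times r}$, again giving $q^{r(h-r)}$ complements. An alternative I would keep in reserve is to count ordered bases of complements directly and divide by the number of ordered bases of an $(h-r)$-space; that route reaches the same answer but hides the exponent behind a $q$-arithmetic simplification.
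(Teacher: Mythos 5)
Your proof is correct and takes essentially the same approach as the paper's first proof: your graph parametrization $Y_\phi = \{\,v+\phi(v) : v\in Y_0\,\}$ is precisely the paper's row-reduced normal form $\left(\begin{array}{cc} B & I_{h-r}\end{array}\right)$, with $B$ the matrix of $\phi$ in the chosen basis, your version merely making the injectivity/surjectivity of the correspondence explicit where the paper leaves it implicit in the phrase ``after row reduction.'' The only thing you miss is the paper's second, purely lattice-theoretic proof by induction on $h$, which establishes the count for arbitrary positive integers $q$ rather than only prime powers.
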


\begin{proof}
Let $c$ be a flat of height $r$. By choice of basis, we may assume $c$ is the row space of the following matrix:
\[\left(    \begin{array}{cc}      I_{r} & 0_{r, h-r} \\    \end{array}  \right).\]


Now let $x$ be a complementary flat to $c$. It must have height $h-r$. Then certainly the last $h-r$ columns of the matrix of $x$ are independent. After row reduction, we find that $x$ is the row space of the matrix
\[\left(    \begin{array}{cc}      B & I_{h-r} \\    \end{array}  \right),\]
where $B$ is an $(h-r) \times r$ matrix.

There are $q^{r(h-r)}$ such matrices $B$, each of which yields a different flat $x$ complementary to $c$.
\end{proof}

What follows is a lattice theoretic proof valid for any positive integer value of $q$, including the unsurprising case when $q=1$.

\begin{proof}
We proceed by induction on the height $h$ of $L$. If $0\leq h\leq 1$, the conclusion is a triviality. If $h=2$, the only interesting case is when $r=1$, and each of the $q+1$ atoms has exactly $q$ complements, so the conclusion holds.

If $\h{x} $ is equal to $0$ or $h$, the $x$ has a unique complement, so the conclusion holds.
If $x$ is a coatom, $\h{x} = h-1$, then $L$ has $h_q = 1 + q + q^2 + ... + q^{h-1}$ atoms,
$(h-1)_q = 1 + q + q^2 + ... + q^{h-2}$ of which are beneath $x$, so $x$ has $q^{h-1}= q^{1(h-1)}$
complements, as required.

Otherwise, we may select a coatom $w$ of $L$, with $x<w$. We aim to show that the complements $y$ of $x$ in the overall lattice interval $[\bo,\tp]$ are precisely those support flats $y$ such that the meet $z= y\meet w$
is a complement of $x$ in the interval $[\bo,w]$ {\it and} $y$ is a complement of $w$ in the interval $[z,\tp]$.

The latter condition may be rewriten:
$$
y\meet w = z,\quad y\join w = \tp,\quad x\meet z = \bo,\quad x\join z=w.
$$
If this is the case, then 
$$x\meet y = (x\meet w) \meet y = x \meet (y\meet w) = x \meet z = \bo$$
while $y>z$ and $y\not\leq w$, so $x\join y> x\join z = w$, which implies that $x\join y=\tp$.

Conversely, if $y$ is a complement of $x$, and $z=y\meet w$ then $x\meet z\leq x\meet y=\bo$,
while $x\join z\leq w$ and, by modularity, $x\join y = \tp$ covers $x\join z$, so $x\join z = w$, and
$z$ is a complement of $x$ in the interval $[\bo,w]$. Finally, since $[z,y]$ is a covering projecting upwards to the covering $[w,\tp]$, $y$ is a complement of $w$ in the interval $[z,\tp]$.

By the induction hypothesis, there are $q^{r(h-1-r)}$ distinct complements $z$ of $x$ in the interval $\bo,w$, and for each such support flat $z$, $q^{r}$ complements $y$ of $w$ in the interval $[z,\tp]$, so 
$$
q^{r(h-1-r)}\;\;q^{r}\;\;=\;\;q^{r(h-r)}
$$
complements of $x$ in $L$.
\end{proof}

The following relation (cf. [LW]) among $q$-binomial coefficients is an easy consequence of Proposition 4.

\begin{proposition} 
For all integers $r,h$ with $0\leq r\leq h$,

\begin{equation}     \label{E:curly}
\bin{h}{r}{q}\;\;=\;\;\sum_{i=0}^{r}\;\bin{r}{i}{q}\;\;\bin{h-r}{i}{q}\;q^{(r-i)(h-r-i)}        
\end{equation}

\end{proposition}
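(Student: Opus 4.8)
The plan is to prove the stated identity by a double count. I would read the left-hand side $\bin{h}{r}{q}$ as the number of flats of height $r$ in $L_{q,h}$, partition those flats into classes indexed by $i$, and use Proposition 4 to measure each class; summing the class sizes produces the right-hand side. (This is a form of the $q$-Vandermonde identity, but organizing it around complements keeps it self-contained and makes Proposition 4 the engine.)

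First I would fix a complementary pair $A,B$ in $L_{q,h}$ with $\h{A}=r$ and $\h{B}=h-r$, so $A\join B=\tp$ and $A\meet B=\bo$; in the vector-space model this realizes $\tp$ as $A\oplus B$. To each flat $X$ of height $r$ I attach two invariants: $S=X\meet B$ and $P=(X\join B)\meet A$ (the projection of $X$ onto $A$ along $B$, rendered in lattice terms). Setting $i=\h{S}$, two applications of modularity give $\h{X\join B}=h-i$ and then $\h{P}=r-i$. The index $i$ runs over $0\le i\le\min(r,h-r)$; values $i>h-r$ contribute nothing because $\bin{h-r}{i}{q}=0$.

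Next I would count, for a fixed $S\le B$ of height $i$ and a fixed $P\le A$ of height $r-i$, the flats $X$ in the corresponding class, namely those of height $r$ with $X\meet B=S$ and $(X\join B)\meet A=P$. The crucial step is to recognize this class as a set of complements: every such $X$ satisfies $S\le X\le P\join B$, and within the interval $[S,\,P\join B]$ the conditions $X\meet B=S$ and $X\join B=P\join B$ say precisely that $X$ is a complement of $B$. That interval is again the flat lattice of a projective geometry, of height $(r-i)+(h-r-i)$, in which $B$ has height $h-r-i$; so Proposition 4 gives exactly $q^{(r-i)(h-r-i)}$ such complements. Since $S$ can be chosen in $\bin{h-r}{i}{q}$ ways and $P$ in $\bin{r}{r-i}{q}=\bin{r}{i}{q}$ ways, summing over $i$ yields the right-hand side.

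The one genuinely delicate point is that middle step: I must justify that the fibre of $X\mapsto(S,P)$ really is in bijection with the complements of $B$ in $[S,\,P\join B]$. I would check both directions — that distinct $X$ in the fibre stay distinct, and that every complement of $B$ in the interval indeed has $X\meet B=S$ and $(X\join B)\meet A=P$, hence lies in the fibre — and confirm the height count $(r-i)+(h-r-i)=h-2i$ of the interval so that Proposition 4 applies with the right exponent. Everything else (the modular height computations and the symmetry $\bin{r}{i}{q}=\bin{r}{r-i}{q}$) is routine, and I expect no obstacle beyond pinning down this correspondence precisely.
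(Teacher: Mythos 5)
Your proposal is correct and is essentially the paper's own argument: both proofs classify flats by their meet and join with a fixed reference flat of complementary height, recognize each fibre as the set of relative complements in the interval between that meet and join, and count it as $q^{(r-i)(h-r-i)}$ via Proposition 4. The only differences are cosmetic --- you fix the height-$(h-r)$ flat $B$ and enumerate flats of height $r$ directly, whereas the paper fixes a height-$r$ flat, enumerates flats of height $h-r$, and then uses $\bin{h}{h-r}{q}=\bin{h}{r}{q}$; your projection $P=(X\join B)\meet A$ merely transports the invariant $X\join B$ into the interval $[\bo,A]$ via an auxiliary complement $A$ that the paper's version does not need.
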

\begin{proof}
For any support flat $x$ of height $r$, and for all support flats $c$ of complementary height, there is an integer $i$ such that $\h{x\meet c} =i$ and $\h{x\join c} =h-i$. There are $\bin{h-r}{i}{q}$ choices for $x\join c$, 
$\bin{r}{i}{q}$ choices for $x\meet c$, and $q^{(h-r-i)(r-i)}$ complements $c$ of $x$ in the interval $[x\meet c,x\join c]$, so the number of support flats of height $h-r$, equal to the number of flats of height $r$, is given by the above sum.
\end{proof}

\begin{figure}[h] 
\centering\includegraphics[scale=.8]{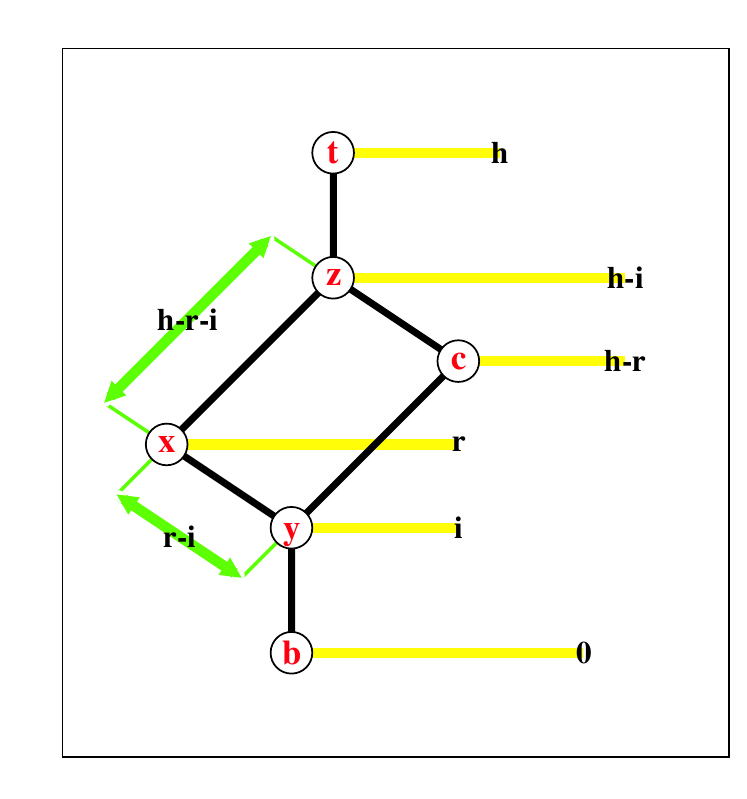} 
\caption{Heights involved in the above proof.}
\end{figure}

\section{Defining the Tutte polynomial}

For a classical matroid $M$ on a set $S$, with the Boolean algebra $2^S$ as support lattice, its Tutte polynomial $\tau_M$, is defined as the sum, over all bases $B$ of $M$, of the monomials $x^iy^j$, where 
$i$ is the {\it internal activity} of the basis $B$, and $j$ is its {\it external activity}. Furthermore, as seen in [C1] and [C3], if one associates with each basis $B$ the minor obtained by restriction to the interval $[B^-,B^+]$, where $B^+\backslash B$ is the set of externally active elements of $B$ and $B\backslash B^-$ is the set of internally active elements of $B$, the resulting minors are prime-free (i.e.: sums of loops and isthmi), each having a single clopen flat. This set of minors $[B^-,B^+]$ {\it partitions} the Boolean support lattice. Let's call this a {\it Tutte partition} of the support lattice.

For $q$-matroids, such partitions of the support lattice into prime-free minors are still present,  each such minor, say of rank $\rho$, nullity $\nu$, having a single clopen flat, and the prime-free minors partition the complemented modular support lattice. But each such minor contains several bases ($q^{\rho\nu}$ bases, to be precise). So the Tutte $q$-polynomial is no longer a two-variable sum over bases, but a two-variable sum over parts of a Tutte partition of the support lattice into prime-free minors.

Each covering within each of these  prime-free minors should be regarded as ``active'',  directed downward if red, upward if green, thus providing directed paths in that minor from all its bases toward the minor's single clopen flat.\footnote{It may seem awkward that internal activity be associated with coverings {\it above} the clopen flat $c$, external activity with coverings {\it below} $c$, in the minor associated with $c$. But this better retains the analogy with the construction of the Tutte polynomial for classical matroids.}.

Let $M_q=Z_{\rho,\nu}$ be a prime-free matroid of rank $\rho$ and nullity $\nu$. Its support lattice $L$ is of height $h=\rho+\nu$.
The matroid $Z_{\rho,\nu}$ will have a unique clopen flat  $c$, which will be of height $\nu$. Any flat $x$ of height 
$\h{x}=k$ in $L$ will have rank lack $h-\h{c\join x}$, nullity $\h{c\meet x}$, and will be a relative complement of $c$ in the interval $[c\meet x,c\join x]$.

As we observed in the previous section, the number of support flats $x$ with these values of rank lack $i$ and nullity $j$ will be a quantity we denote $\alpha_{\rho,\nu;i,j}$, the coefficient of $x^i\;y^j$ in the rank generating function for the prime-free $q$-matroid $Z_{\rho,\nu}$:
\begin{equation}\label{E:zeta}
\alpha_{\rho,\nu;i,j}\;=\;\bin{\rho}{i}{q}\;\bin{\nu}{j}{q}\;q^{(\rho-i)(\nu-j)}
\end{equation}

\begin{theorem} 
The rank generating function of the prime-free $q$-matroid of rank $\rho$, nullity $\nu$ is
\begin{equation}\label{E:TutteToRGF}
\sum_{i=0}^{\rho}\;\sum_{j=0}^{\nu}\;\bin{\rho}{i}{q}\;\bin{\nu}{j}{q}\;q^{(\rho-i)(\nu-j)}\;x^i\;y^j
\end{equation}
So a $q$-matroid $M$ with Tutte $q$-polynomial
$$
\sum_{a=0}^{\rho(M)}\sum_{b=0}^{\nu(M)} \tau_{a,b}\;x^a\; y^b
$$
will have rank generating function 
$$
\sum_{a=0}^{\rho(M)}\sum_{b=0}^{\nu(M)}\;\tau_{a,b}\;\sum_{i=0}^{a}\;\sum_{j=0}^{b}\;\bin{a}{i}{q}\;\bin{b}{j}{q}\;q^{(a-i)(b-j)}\;x^i\;y^j
$$
\end{theorem}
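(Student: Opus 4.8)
The plan is to prove the two assertions in turn: the closed form for the prime-free rank generating function by a direct count, and the general formula by decomposing $\rgf{M}$ across a Tutte partition and invoking the first assertion part by part.

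First I would establish the coefficient formula \eqref{E:zeta}, from which \eqref{E:TutteToRGF} is immediate by the definition of $\rgf{}$. By Proposition 3 the prime-free matroid $Z_{\rho,\nu}$ has a unique clopen flat $c$ of height $\nu$, and (as recalled just before the theorem) a flat $x$ has rank lack $i=\h{L}-\h{c\join x}$, nullity $j=\h{c\meet x}$, and is a relative complement of $c$ in the interval $[c\meet x,\,c\join x]$. To count the flats of rank lack $i$ and nullity $j$ I would count triples $(c\meet x,\,c\join x,\,x)$: there are $\bin{\nu}{j}{q}$ subflats of $c$ of height $j$ available as $c\meet x$; there are $\bin{\rho}{i}{q}$ flats of corank $i$ in the interval $[c,\tp]$ (of height $\rho$) available as $c\join x$; and once these are fixed, $c$ sits at height $\nu-j$ inside $[c\meet x,\,c\join x]$, whose total height is $(\rho-i)+(\nu-j)$, so by Proposition 4 there are $q^{(\rho-i)(\nu-j)}$ relative complements $x$ of $c$ in that interval. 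Multiplying the three factors gives \eqref{E:zeta}.

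For the general statement I would show that $\rgf{}$ is additive over a Tutte partition $L=\bigsqcup_P[z_P,w_P]$ into prime-free minors, where each part is of rank $\rho_P$, nullity $\nu_P$, and $\tau_{a,b}$ counts the parts with $(\rho_P,\nu_P)=(a,b)$. It suffices to show that, for every flat $z$ lying in part $P$, the monomial $x^{\rho(\tp)-\rho(z)}\,y^{\nu(z)}$ that $z$ contributes to $\rgf{M}$ equals the monomial $z$ contributes to $\rgf{Z_{\rho_P,\nu_P}}$ computed in the standalone minor. Summing over $z\in P$ then gives $\rgf{M}=\sum_P\rgf{Z_{\rho_P,\nu_P}}$; grouping the parts by type and applying the first assertion yields exactly the displayed double sum.

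The step I expect to be the main obstacle is this localization of rank lack and nullity. Since rank counts red coverings and is path-independent, the rank lack of $z$ in $M$ splits as the red coverings from $z$ to $w_P$ plus the red coverings from $w_P$ to $\tp$, while its nullity splits as the green coverings from $z_P$ to $z$ plus the green coverings from $\bo$ to $z_P$. The two monomials therefore agree exactly when the top $w_P$ of every part is spanning, $\rho(w_P)=\rho(\tp)$, and the bottom $z_P$ is independent, $\nu(z_P)=0$. I would verify this from the construction of the Tutte partition, mirroring the classical situation of [C1],[C3], where the top $w_P=B^+$ carries the full rank of the basis $B$ and the bottom $z_P=B^-$ is a subflat of $B$, so that rank lack and nullity measured in $M$ coincide with those measured inside the part. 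Establishing this structural property of the $q$-matroid activity partition is the only non-routine ingredient; the remainder is the count above together with a regrouping of terms.
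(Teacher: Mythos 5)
Your first paragraph is correct and is precisely the paper's own derivation of Equation \eqref{E:zeta}: the paper likewise fixes the unique clopen flat $c$ of height $\nu$ (Proposition 3), notes that a flat $x$ has rank lack $h-\h{c\join x}$ and nullity $\h{c\meet x}$ and is a relative complement of $c$ in $[c\meet x,c\join x]$, and multiplies the counts $\bin{\nu}{j}{q}$, $\bin{\rho}{i}{q}$ and $q^{(\rho-i)(\nu-j)}$, the last coming from Proposition 4 applied to that interval. So the first assertion is in order, and by the same route as the paper.

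For the second assertion there is a genuine gap, and you have pointed at it yourself without closing it. Your reduction is correct, and is more explicit than anything the paper offers: by path-independence of the red/green counts, for $z$ in a part $[z_P,w_P]$ one has $\rho(\tp)-\rho(z)=(\rho(w_P)-\rho(z))+(\rho(\tp)-\rho(w_P))$ and $\nu(z)=\nu_P(z)+\nu(z_P)$, so the monomials localize to the parts exactly when every $w_P$ is spanning and every $z_P$ is independent. The problem is the final step, ``I would verify this from the construction of the Tutte partition'': there is no such construction to verify it against. The paper never constructs Tutte partitions of a general $q$-matroid, and no $q$-analogue of the classical activity intervals $[B^-,B^+]$ of [C1], [C3] is given anywhere; Tutte partitions appear only in worked examples, and Section 10 shows that the condition you need is not automatic for partitions into prime-free minors: the center partition of Figure 10 is a \emph{maximal} partition of $U_{2,1}$ into prime-free minors, yet it has a singleton part $[p,p]$ at a (non-spanning) point, which contributes $1$ when computed standalone but $x$ to $\rgf{U_{2,1}}$; that partition violates your condition and does not yield the Tutte polynomial. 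Consequently one must either build the spanning/independence requirement into the very definition of a Tutte partition --- after which your additivity argument (and the paper's implicit one in Section 9) goes through, but the existence of such a partition for every $q$-matroid becomes an unproved statement, left open by the paper itself --- or else, as the paper operationally does in Sections 8 and 9, take the displayed substitution as the \emph{definition} of the passage from Tutte coefficients to the rank generating function. As written, your plan for the second half assumes exactly the structural fact that is missing, so it does not constitute a proof; its merit is that it isolates correctly the assumption the paper leaves tacit.
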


\begin{figure}[h] 
\centering\includegraphics[scale=.75]{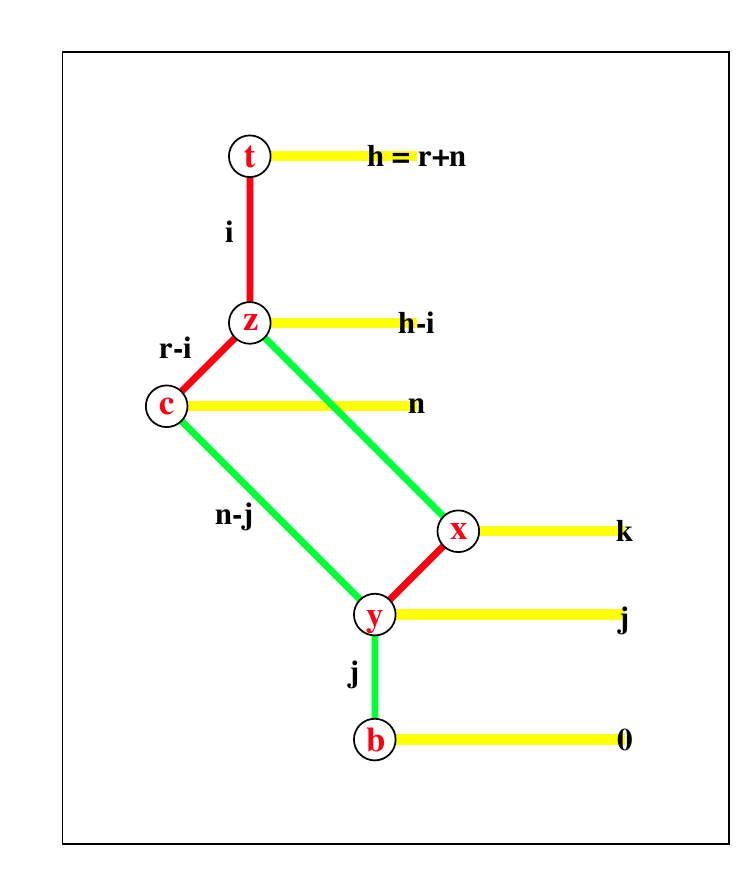} 
\caption{Heights for calculating the {\it rgf}  for the prime-free\quad\quad\quad $q$-matroid $Z_{\rho,\nu}$.}
\end{figure}

This is what replaces the substitution $x \to (x+1),\; \text{and}\; y \to (y+1)$ in the case ``$q=1$'', that is, for ordinary matroids. How are we to transform the rank generating function of a $q$-matroid into its associated Tutte polynomial? What we need is a convolution inverse of the formula $\alpha_{\rho,\nu;i,j}$. This will be a convolution inverse in the context of the incidence algebra of a simple ``rectangular'' poset $R$: the product of two chains $0,\dots,\rho$ and $0,\dots,\nu$. We will call this convolution inverse $\beta_{\rho,\nu;i,j}$. It is an inverse in the sense that, using the symbol $\circ$ to denote convolution,
$$\alpha\circ\beta([a,b],\;[e,f]);=\;\sum_{[c,d]\in R} \alpha(a,b;c,d)\; \beta(c,d;e,f)\;=\;\delta_a^e\;\;\delta_b^f,$$
the term on the right being a product of Kronecker deltas. This is a two-sided inverse, so also 
$$\beta\circ\alpha\; ([a,b],[e,f])\;=\;\delta_a^e\;\delta_b^f.$$

What follows is a conjectured formula for the convolution inverse $\beta$, obtained by studying the results of a Python program that follows the recursive subtraction procedure outlined in Section 8.

\begin{conjecture}
$\beta(a,b;c,d) $ is equal to
$$
 (-1)^{(a-c)+(b-d)}\;\;\bin{a}{c}{q}\;\;\bin{b}{d}{q}\;\;q^{\binom{|(a-c)-(b-d)|}{2}}(1 + q^{|(a-c)-(b-d)|} - q^{max((a-c),(b-d))})
$$
\end{conjecture}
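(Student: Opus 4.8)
The plan is to read this as an identity in the incidence algebra of the finite rectangular poset $R=\{0,\dots,\rho\}\times\{0,\dots,\nu\}$ ordered componentwise. Every diagonal value $\alpha(a,b;a,b)=1$ is invertible, so $\alpha$ is a unit of this incidence algebra and hence has a \emph{unique} two-sided convolution inverse. Consequently it is enough to verify the one-sided identity $\alpha\circ\beta=\delta$; the right inverse of a unit automatically coincides with the two-sided inverse, which then also gives $\beta\circ\alpha=\delta$. Concretely I would compute, for $(e,f)\le(a,b)$,
\[
(\alpha\circ\beta)(a,b;e,f)\;=\;\sum_{\substack{e\le c\le a\\ f\le d\le b}}\bin{a}{c}{q}\bin{b}{d}{q}\,q^{(a-c)(b-d)}\,\beta(c,d;e,f),
\]
and show that it equals $\delta_a^e\,\delta_b^f$.

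The first real step is to strip off the ``outer'' binomials using the subspace-of-subspace identity $\bin{a}{c}{q}\bin{c}{e}{q}=\bin{a}{e}{q}\bin{a-e}{c-e}{q}$, and likewise in the second coordinate. Writing $s=a-e$, $t=b-f$, $u=c-e$, $v=d-f$, this pulls a common factor $\bin{a}{e}{q}\bin{b}{f}{q}$ out of the whole sum and leaves $(\alpha\circ\beta)(a,b;e,f)=\bin{a}{e}{q}\bin{b}{f}{q}\,S(s,t)$, where
\[
S(s,t)=\sum_{u=0}^{s}\sum_{v=0}^{t}(-1)^{u+v}\bin{s}{u}{q}\bin{t}{v}{q}\,q^{(s-u)(t-v)}\,q^{\binom{|u-v|}{2}}\bigl(1+q^{|u-v|}-q^{\max(u,v)}\bigr).
\]
Since $\bin{a}{e}{q}\bin{b}{f}{q}\neq0$, the entire theorem reduces to the self-contained $q$-identity $S(s,t)=\delta_{s,0}\delta_{t,0}$, a statement depending only on the gaps $s,t$. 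Relabelling the two summation indices shows $S(s,t)=S(t,s)$.

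To evaluate $S$ I would first clear the boundary cases. The value $S(0,0)=1$ is immediate, and when $t=0$ (dually $s=0$) the correction factor collapses: $v=0$ forces $|u-v|=\max(u,v)=u$, so $1+q^{u}-q^{u}=1$ and the inner sum is exactly $\sum_{u}(-1)^{u}q^{\binom{u}{2}}\bin{s}{u}{q}=(1;q)_s=\prod_{i=0}^{s-1}(1-q^{i})$, which vanishes for $s\ge1$ by the $q$-binomial theorem. For the generic case I would apply $\binom{m}{2}+m=\binom{m+1}{2}$ with $m=|u-v|$ to break the bracket,
\[
q^{\binom{|u-v|}{2}}\bigl(1+q^{|u-v|}-q^{\max(u,v)}\bigr)=q^{\binom{|u-v|}{2}}+q^{\binom{|u-v|+1}{2}}-q^{\binom{|u-v|}{2}+\max(u,v)},
\]
giving $S=S_1+S_2-S_3$, and then evaluate each piece after splitting the summation region into $u\ge v$ and $u<v$. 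On each region $|u-v|$ and $\max(u,v)$ are linear, so the coupling factor $q^{(s-u)(t-v)}$ can be absorbed by $q$-Vandermonde/$q$-Chu summation together with the $q$-binomial theorem $\sum_{u}(-1)^{u}q^{\binom{u}{2}}\bin{s}{u}{q}z^{u}=(z;q)_s$.

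The main obstacle is precisely this recombination: none of $S_1,S_2,S_3$ vanishes on its own, and the non-polynomial quantities $|u-v|$ and $\max(u,v)$ block any single application of a $q$-summation formula, so the cancellation surfaces only after the region split and after the three evaluated pieces are reassembled. I expect the cleanest route to require an auxiliary inner-sum lemma, summing along the diagonals $u-v=\text{const}$ (or over $\min(u,v)$), reducing each region to a product of $q$-Pochhammer symbols carrying a factor $(1;q)_s$ or $(1;q)_t$ that annihilates every term with $(s,t)\neq(0,0)$; a short induction on $s+t$ via $q$-Pascal is a plausible fallback. It is worth emphasising why a purely structural argument seems unavailable: the coupling $q^{(a-c)(b-d)}$ means $\alpha$ is \emph{not} the zeta function of the product poset $R$, so $\beta$ cannot be a mere product of one-dimensional Möbius functions, and the extra factor $\bigl(1+q^{|u-v|}-q^{\max(u,v)}\bigr)$ is exactly the $q$-deformation measuring this failure---which is consistent with its collapse to $1$ both at $q=1$ (recovering ordinary bivariate binomial inversion) and whenever $u$ or $v$ is $0$.
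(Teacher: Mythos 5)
Your reduction is sound as far as it goes, but it stops exactly where the real work begins, so this is not yet a proof. The setup --- observing that $\alpha$ has all diagonal values $\alpha(a,b;a,b)=1$, hence is a unit of the incidence algebra with a unique two-sided inverse, so that a one-sided verification suffices --- is correct, and your factorization via $\bin{a}{c}{q}\bin{c}{e}{q}=\bin{a}{e}{q}\bin{a-e}{c-e}{q}$ correctly reduces everything to the self-contained identity $S(s,t)=\delta_{s,0}\,\delta_{t,0}$. (This is precisely the paper's ``Stage 1'': your $S(s,t)$ is the sum of the entries of the paper's matrix $N_{i,j}$ with $a-e=s$, $b-f=t$.) Your boundary cases are also right: for $t=0$ the bracket collapses to $1$ and the sum becomes $\sum_u(-1)^uq^{\binom{u}{2}}\bin{s}{u}{q}$, which vanishes for $s\ge1$; this is exactly the paper's Theorem 4. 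But for $s,t\ge1$ you offer only a plan --- split the bracket into three sums, split the region at the diagonal $u=v$, and hope that $q$-Vandermonde plus the $q$-binomial theorem recombine --- and you yourself flag this recombination as ``the main obstacle.'' That obstacle is the entire content of the statement: the coupling $q^{(s-u)(t-v)}$ together with the non-polynomial exponents $|u-v|$ and $\max(u,v)$ is precisely what defeats any single application of a standard summation formula, and naming candidate techniques ($q$-Chu, diagonal summation, induction on $s+t$) is not the same as executing one of them. As written, the central identity is asserted, not proven.

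For context: the paper does not prove this statement either --- it is explicitly labelled a Conjecture there, supported by numerical evidence from a Python implementation of the recursive-subtraction algorithm, and the argument in Section 9 is an admitted sketch whose Stages 2 through 5 carry the annotation ``(proof required)'' and which leans on a second unproven conjecture (a shifted analogue of Theorem 4). So you have, in effect, independently reconstructed the paper's Stage 1 reduction and its Theorem 4, and then proposed a different attack on the core --- a region-splitting $q$-series evaluation --- where the paper proposes a row-and-column-sum analysis of the matrix $N_{i,j}$ with divisibility by the final column sum; neither attack is carried out. Your one genuine addition is the clean observation that invertibility of $\alpha$ makes a one-sided check sufficient, a point the paper glosses over when it asserts the inverse is two-sided. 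To turn your sketch into a proof you must actually evaluate $S(s,t)$ for $s,t\ge1$; until then the gap in your proposal is the same one the paper itself acknowledges.
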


\section{The route back from RGFs to Tutte polynomials}

Say we have the rank generating function of a $q$-matroid $M$ of rank $\rho$, nullity $\nu$. How do we find its Tutte polynomial? 

There is a conceptually simple recursive method to pass from the rank generating function to the Tutte polynomial. If $M$ is prime-free, we know that its Tutte polynomial is the monomial $\tau = x^\rho y^\nu$. Let's start with a simple example, for a $q$-matroid that is not prime-free, the $q$-matroid B seen in Figure 6. The $1$ in position $(1,1)$ in the coefficient array
$$
\begin{matrix}
1&1\\6&7&1\\
\end{matrix}
$$
can only come from a $1$ in that position for the coefficient array of the Tutte polynomial, which would account for values
$$
\begin{matrix}
1&1\\2&1&0\\
\end{matrix}
$$
in the rank generating function. Subtracting off these values, there remains a summand of the rank generating function to explain:
$$\begin{matrix}
1&1\\6&7&1\\
\end{matrix}
\;\;-\;\;
\begin{matrix}
1&1\\2&1&0\\
\end{matrix}
\;\;=\;\;
\begin{matrix}
0&0\\4&6&1\\
\end{matrix}
$$
The $1$ in position $(2,0)$ of the residue can only come from a $1$ in that position in the Tutte polynomial,
which would in turn make a contribution of 
 $$
\begin{matrix}
0&0\\1&3&1\\
\end{matrix}
$$ 
to the rank generating function. Subtracting this contribution, there remains a summand
$$
\begin{matrix}
0&0\\4&6&1\\
\end{matrix}
\;\;-\;\;
\begin{matrix}
0&0\\1&3&1\\
\end{matrix}
\;\;=\;\;
\begin{matrix}
0&0\\3&3&0\\
\end{matrix}
$$
of the coefficient array to explain,
this difference being the coefficient array for {\it 3~times} the rank generating function for $Z_{1,0}$.
We see that the coefficient array for the Tutte polynomial is 
$$
\begin{matrix}
0&1\\0&3&1\\
\end{matrix}
$$

In Figure 9 we see  a Tutte partition for this $q$-matroid $B$, enumerated by exactly the polynomial we have  calculated above.
\begin{figure}[h] 
\centering\includegraphics[scale=.64]{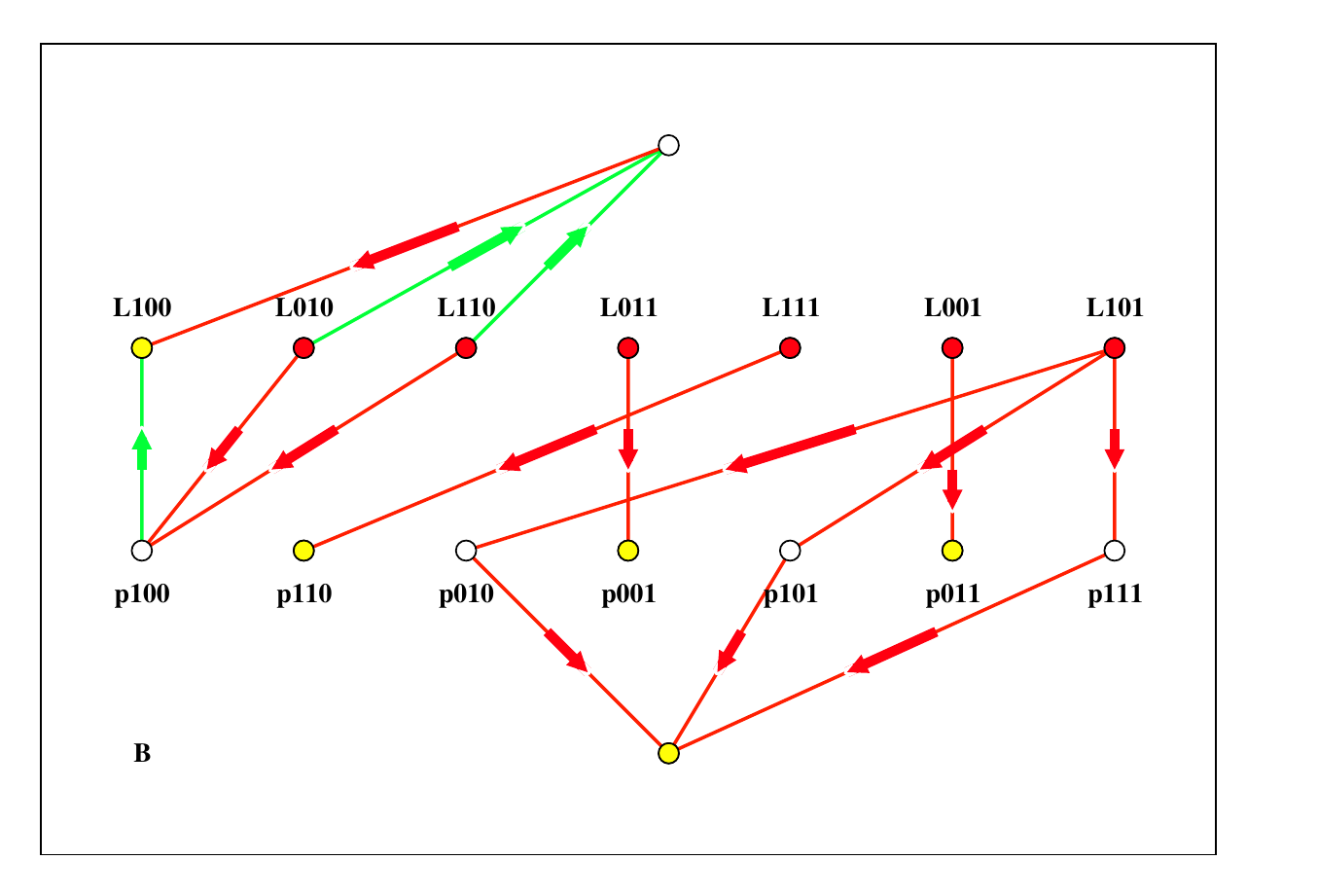} 
\caption{A Tutte partition of $q$-matroid B.}
\end{figure}

In order to see more clearly the role of the $q$-calculus, as in Kac and Cheung [KC], we now consider a well-defined class of $q$-matroids existing {\it for all values of }$q$:  the {\it uniform} $q$-matroids. 
Let's try to find the Tutte polynomial for the uniform $q$-matroid $U_{2,3}$, for all values of $q=p^n$.

We use the notation 
$\tau\;=\;\sum_i\sum_j\;\tau_{i,j}x^iy^j$ and $\rho\;=\;\sum_i\sum_j\;\rho_{i,j}x^iy^j$. 
The rank generating function is
$$\begin{pmatrix}
1\\
1+q+q^2+q^3+q^4\\
1+q+2q^2+2q^3+2q^4+q^5+q^6\\
1+q+2q^2+2q^3+2q^4+q^5+q^6 & 1+q+q^2+q^3+q^4 & 1\\
\end{pmatrix}$$ 

Letting $\tau_{0,3} = 1$, we subtract 
$$\begin{pmatrix}
1\\
1+q+q^2\\
1+q+q^2\\
1&0&0\\
\end{pmatrix}$$ from the rank generating function coefficient array, leaving a remainder of

$$\begin{pmatrix}
0\\
q^3+q^4\\
q^2+2q^3+2q^4+q^5+q^6\\
q+2q^2+2q^3+2q^4+q^5+q^6&1+q+q^2+q^3+q^4&1\\
\end{pmatrix}$$ 

We then set $\tau_{0,2} = q^3+q^4$, and subtract 
$$
\begin{pmatrix}
q^3+q^4\\
(q^3+q^4)(1+q)\\
q^3+q^4&0&0\\
\end{pmatrix} 
\quad=\quad
\begin{pmatrix}
q^3+q^4\\
q^3+2q^4+q^5\\
q^3+q^4&0&0\\
\end{pmatrix}
$$ 

from the previous remainder, leaving
$$\begin{pmatrix}
0\\
0\\
q^2+q^3+q^6\\
q+2q^2+q^3+q^4+q^5+q^6&1+q+q^2+q^3+q^4&1\\
\end{pmatrix}$$ 

We set $\tau_{0,1} =q^2+q^3+q^6$, and subtract  
$$\begin{pmatrix}
0\\
0\\
q^2+q^3+q^6\\
q^2+q^3+q^6&0&0\\
\end{pmatrix}$$
from the previous remainder, leaving
$$\begin{pmatrix}
0\\
0\\
0\\
q+q^2+q^4+q^5&1+q+q^2+q^3+q^4&1\\
\end{pmatrix}$$ 

We set $\tau_{2,0} = 1$, and subtract 
$$\begin{pmatrix}
0\\
0\\
0\\
1&1+q&1\\
\end{pmatrix}$$
from the previous remainder, leaving
$$\begin{pmatrix}
0\\
0\\
0\\
-1+q+q^2+q^4+q^5 &q^2+q^3+q^4&0\\
\end{pmatrix}$$

We set $\tau_{1,0} = q^2+q^3+q^4$, and subtract 
$$\begin{pmatrix}
0\\
0\\
0\\
q^2+q^3+q^4&q^2+q^3+q^4&0\\
\end{pmatrix}$$
from the previous remainder, leaving
$$\begin{pmatrix}
0\\
0\\
0\\
-1+q-q^3+q^5 &0&0\\
\end{pmatrix}$$

Finally, we set $\tau_{0,0} = -1+q-q^3+q^5 $ and collect the coefficients $\tau_{i,j}$ into the coefficient array
$$\begin{pmatrix}
1\\
q^3+q^4\\
q^2+q^3+q^6\\
-1+q-q^3+q^5 &q^2+q^3+q^4&1\\
\end{pmatrix}$$
for the Tutte polynomial.

\section{Converting the RGF to the Tutte polynomial}

To convert a Tutte polynomial to its corresponding RGF, we treat every coefficient $\tau_{\rho,\nu}$ of $x^\rho y^\nu$ in the Tutte polynomial as the number of prime-free minors with rank $\rho$ and nullity $\nu$, so the product 
$ x^\rho y^\nu$ in the Tutte polynomial contributes the terms of Equation (3):
$$
\sum_{i=0}^{\rho}\;\sum_{j=0}^{\nu}\;\bin{\rho}{i}{q}\;\bin{\nu}{j}{q}\;q^{(\rho-i)(\nu-j)}\;x^i\;y^j
$$
So what contribution does each term in the rank generating function make to the Tutte polynomial?

The processes of conversion between rank generating functions and Tutte polynomials are linear over the ring of polynomials in $q$. So it suffices to know the contribution of each monomial $x^a y^b$ in the Tutte polynomial to the corresponding rank generating function, and the contribution of each monomial $x^a y^b$ in the rank generating function to the corresponding Tutte polynomial. The quantity $\alpha(a,b;c,d)$, defined above in formula (3), is the {\it multiplier} of the coefficient of $x^a y^b$ in a Tutte polynomial that produces the coefficient of 
$x^c y^d$ in the associated rank generating function. Inversely, the formula $\beta(a,b;c,d)$, recorded in Conjecture 1, is the {\it multiplier} of the coefficient of $x^a y^b$ in a rank generating function that produces the coefficient of 
$x^c y^d$ in the associated Tutte polynomial. Both $\alpha(a,b;c,d)$ and  $\beta(a,b;c,d)$ are $q$-polynomials.

As we have seen from our study of rank generating functions of the prime-free $q$-matroids $Z_{\rho,\nu}$,
we know that
$$
 \alpha(a,b;c,d)\;=\;\bin{a}{c}{q} \bin{b}{d}{q} q^{(a-c)(b-d)}. 
$$
This formula agrees with the formula for the classical case, where the conversion is accomplished by the substitution $x\to(x+1), y\to(y+1)$, since $\bin{i}{j}{1}= \binom{i}{j}$ and $1^{(a-c)(b-d)}=1$.
\clearpage

Using the algorithm of recursive subtraction, as in Section 8, programmed in Python, we arrive at Conjecture 1, above, that the contribution
$\beta(a,b;c,d)$ of a coefficient 1 of $x^a y^b$, in a rank generating function, to the coefficient of $x^c y^d$ in the corresponding Tutte polynomial
\begin{enumerate}
\item[(1)] has an alternating sign $S= (-1)^{(a-c)+(b-d)}$
\item[(2)] has a factor of $\bin{a}{c}{q}$
\item[(3)] has a factor of $\bin{b}{d}{q}$
\item[(4)] has a final term Q(a,b;c,d), which we now  explain.
\end{enumerate}

This final term $Q$ is particularly interesting. The product of the sign $S$ with $Q$ is none other than the "scalar" or "corner" coefficient for the rank generating function of the "complementary" prime-free $q$ matroid $Z_{a-c,b-d}$, that is, $\beta(a-c,b-d;0,0)$. $Q$ is a product:  the power $q^{\binom{|(a-c)-(b-d)|}{2}}$  times a linear combination $Q_0$ of exactly {\it three} powers of $q$ (two of which may cancel or merge), with coefficients $\pm 1$,
$$
Q_0(a,b;c,d) = 1 + q^{|(a-c)-(b-d)|} - q^{max((a-c),(b-d))},
$$
so
$$
Q = q^{\binom{|(a-c)-(b-d)|}{2}}\;\;(1 + q^{|(a-c)-(b-d)|} - q^{max((a-c),(b-d))})
$$
The combined expression  $\beta(a,b;c,d) $ is equal to
$$
 (-1)^{(a-c)+(b-d)}\;\;\bin{a}{c}{q}\;\;\bin{b}{d}{q}\;\;q^{\binom{|(a-c)-(b-d)|}{2}}(1 + q^{|(a-c)-(b-d)|} - q^{max((a-c),(b-d))})
$$

The strange power of $q$, $q^{\binom{|(a-c)-(b-d)|}{2}}$ has a simple explanation. We know that the alternating sum, for any positive integer $n$, of classical binomial coefficients is zero:
$$\sum_{i=0}^n (-1)^i \binom{n}{i}\;=\;0$$
The corresponding formula for Gaussian binomial coefficients requires an extra power of $q$ in each summand.
The recursion 
$$ \bin{i-1}{j}{q} = \bin{i}{j}{q} - q^{i-j} \bin{i-1}{j-1}{q}$$
permits us to write the Gaussian binomial coefficients $\bin{i-1}{j}{q}$ as alternating sums of
the coefficients $\bin{i}{k}{q}$ for $k>j$.

\begin{theorem}
$$\sum_{i=0}^{n}  (-1)^i  q^{   \binom{i}{2} } \bin{n}{i}{q}    =  \delta_0^n.$$
\end{theorem}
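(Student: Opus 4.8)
The plan is to prove the identity by induction on $n$, using the $q$-Pascal recursion already recorded in the excerpt. Write $S_n = \sum_{i=0}^{n}(-1)^i q^{\binom{i}{2}}\bin{n}{i}{q}$ for the left-hand side. The base case $n=0$ is immediate: the single surviving term is $(-1)^0 q^{\binom{0}{2}}\bin{0}{0}{q} = 1$, so $S_0 = 1 = \delta_0^0$.

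For the inductive step I would substitute the recursion, rewritten with $i\mapsto n$, $j\mapsto i$ as $\bin{n}{i}{q} = \bin{n-1}{i}{q} + q^{n-i}\bin{n-1}{i-1}{q}$, into $S_n$ and split the sum into two pieces. The first piece is exactly $S_{n-1}$, since the top index $i=n$ contributes nothing ($\bin{n-1}{n}{q}=0$). In the second piece I would reindex by $i = k+1$ (the $k=-1$ term vanishes because $\bin{n-1}{-1}{q}=0$) and use the elementary identity $\binom{k+1}{2} = \binom{k}{2}+k$ to collect the exponents of $q$. The combined exponent $\binom{i}{2}+n-i$ then collapses, after the shift, to $\binom{k}{2}+(n-1)$, so a uniform factor $q^{n-1}$ pulls out front and the second piece becomes $-q^{n-1}S_{n-1}$. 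This yields the one-step recurrence $S_n = (1-q^{n-1})S_{n-1}$.

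Finally I would conclude by iterating: $S_n = \prod_{m=1}^{n}(1-q^{m-1})\cdot S_0$. For every $n \geq 1$ the factor with $m=1$ equals $1-q^0 = 0$, so $S_n = 0$, while $S_0 = 1$; together these say precisely $S_n = \delta_0^n$.

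The step I expect to require the most care is the bookkeeping of the $q$-exponents after the index shift in the second sum, specifically verifying that $\binom{i}{2}+n-i$ really does become $\binom{k}{2}+(n-1)$ under $i = k+1$, so that the whole sum factors cleanly as $q^{n-1}S_{n-1}$ rather than leaving a residual $i$-dependence. This is where the telescoping power $q^{\binom{i}{2}}$ in the statement does its essential work, and a sign error or an off-by-one in the binomial-exponent identity would destroy the recurrence. As a cross-check I would note that the same identity also follows instantly from the finite $q$-binomial theorem $\prod_{k=0}^{n-1}(1+q^k t) = \sum_{i=0}^{n} q^{\binom{i}{2}}\bin{n}{i}{q}\,t^i$ by setting $t=-1$: the factor indexed by $k=0$ then vanishes for $n\geq 1$, reproducing the vanishing directly and confirming the $n=0$ value via the empty product.
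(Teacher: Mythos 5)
Your proof is correct. It rests on the same $q$-Pascal recursion as the paper's --- in the paper's notation, $\bin{i-1}{j}{q} = \bin{i}{j}{q} - q^{i-j}\bin{i-1}{j-1}{q}$ --- but you deploy it quite differently. The paper fixes $n$ and unrolls the recursion term by term, starting from $0 = \bin{n-1}{n}{q}$ and substituting repeatedly until this zero has been rewritten as the full alternating sum; the exponents $\binom{i}{2}$ emerge there as the accumulated products $q^{0+1+\cdots+(i-1)}$, and the argument is in effect an induction on the lower index carried out with a trailing ellipsis. You instead run a genuine induction on $n$: splitting via the recursion and reindexing with $\binom{k+1}{2}=\binom{k}{2}+k$ yields the one-step recurrence $S_n = (1-q^{n-1})\,S_{n-1}$, and the factor $1-q^{0}=0$ annihilates everything from $n=1$ onward. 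Your exponent bookkeeping (that $\binom{i}{2}+n-i$ becomes $\binom{k}{2}+(n-1)$ under $i=k+1$) is exactly right, and this is indeed the only delicate point. What your route buys: a self-contained induction with no ellipses, the stronger product formula $S_n=\prod_{m=1}^{n}(1-q^{m-1})$, and through it the connection to Gauss's $q$-binomial theorem --- your cross-check, setting $t=-1$ in $\prod_{k=0}^{n-1}(1+q^k t) = \sum_{i=0}^{n} q^{\binom{i}{2}}\bin{n}{i}{q}\,t^i$, is by itself the shortest complete proof. What the paper's unrolling buys is a template the authors hope to adapt to the shifted sums of their Conjecture 2, where the exponent $\binom{i-s}{2}$ no longer telescopes as cleanly against the $q$-Pascal weight; your recurrence method would also be a natural thing to try there, though the shift $s$ breaks the exact cancellation that gives the factor $1-q^{n-1}$.
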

\begin{proof}
For $ i>0$,
$$\begin{array}{ccc}
0=\bin{n-1}{n}{q}&=& \bin{n}{n}{q} -q^0 \bin{n-1}{n-1}{q}\\
\bin{n-1}{n-1}{q} &=& \bin{n}{n-1}{q} -q^1 \bin{n-1}{n-2}{q}\\
\bin{n-1}{n-2}{q} &=& \bin{n}{n-2}{q} -q^2 \bin{n-1}{n-3}{q}\\
\bin{n-1}{n-3}{q} &=& \bin{n}{n-3}{q} -q^3 \bin{n-1}{n-4}{q}\\
\dots\\
\bin{n-1}{3}{q} &=& \bin{n}{3}{q} -q^{(n-3)} \bin{n-1}{2}{q}\\
\bin{n-1}{2}{q} &=& \bin{n}{2}{q} -q^{(n-2)} \bin{n-1}{1}{q}\\
\bin{n-1}{1}{q} &=& \bin{n}{1}{q} -q^{(n-1)} \bin{n-1}{0}{q}\\
\bin{n-1}{0}{q} &=& \bin{n}{n}{q} -q^0 \bin{n-1}{-1}{q} = \bin{n}{n}{q}\\
\end{array}$$

So, making this string of substitutions, 
$$\begin{array}{ccl}
0 &=& \bin{n-1}{n}{q}\\
   &=& \bin{n}{n}{q} - q^{0} \bin{n-1}{n-1}{q}\\ 
   &=& \bin{n}{n}{q} - q^{0}\bin{n}{n-1}{q} + q^{0+1}\bin{n-1}{n-2}{q}\\ 
   &=& \bin{n}{n}{q} - q^{0}\bin{n}{n-1}{q} + q^{0+1}\bin{n}{n-2}{q}- q^{0+1+2}\bin{n-1}{n-3}{q}\\
   &=& ...  \\
   &=& \bin{n}{n}{q} - q^{\binom{1}{2}}\bin{n}{n-1}{q} + q^{\binom{2}{2}}\bin{n}{n-2}{q} - q^{\binom{3}{2}}\bin{n-1}{n-3}{q} \\
&&+ \dots + (-1)^{n-2} q^{\binom{n-2}{2}}\bin{n}{2}{q} + (-1)^{n-1} q^{\binom{n-1}{2}}\bin{n}{1}{q}\\
&& + (-1)^{n} q^{\binom{n}{2}}\bin{n}{0}{q} + (-1)^{n-1} q^{\binom{n+1}{2}}\bin{n}{-1}{q}\\
\end{array}$$
where $\bin{n}{-1}{q} = 0$, so the last term in this sum is zero.
\end{proof}

We will need an extension of Theorem 4 to include more general alternating sums of $q$-binomial coefficients that are also equal to zero.
The first step is to extend the  Pascal triangle of classical binomial coefficients $\binom{n}{k}$ to a domain in which $n$ may be negative. See Donald Knuth's ``Concrete Mathematics'' [K], page 164, equation (5.14).
\begin{lemma} For all integers $n$ and all non-negative integers $k\leq n$,
$$
\binom{-n}{k}\;=\;(-1)^k\;\binom{n+k-1}{k}
$$
\end{lemma}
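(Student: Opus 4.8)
The plan is to work directly from the definition of the generalized binomial coefficient. For a non-negative integer $k$ and any integer upper argument $r$, one takes
$$\binom{r}{k} = \frac{r(r-1)(r-2)\cdots(r-k+1)}{k!},$$
a product of exactly $k$ consecutive descending factors divided by $k!$. First I would substitute $r = -n$, so that the numerator becomes the product of the $k$ terms $(-n)(-n-1)(-n-2)\cdots(-n-k+1)$.

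The key step is then to pull a factor of $-1$ out of each of these $k$ factors. This produces a global sign $(-1)^k$ together with the ascending product $n(n+1)(n+2)\cdots(n+k-1)$. Recognizing this ascending product as the rising factorial $(n+k-1)!/(n-1)!$, I would rewrite
$$\binom{-n}{k} = (-1)^k\,\frac{(n+k-1)!}{(n-1)!\,k!} = (-1)^k\binom{n+k-1}{k},$$
which is exactly the asserted identity. Note that this algebraic manipulation never uses the hypothesis $k\leq n$; the identity holds for every non-negative integer $k$, and the stated bound is relevant only for the downstream applications.

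The computation is entirely routine, so there is no serious obstacle; the only point demanding care is the bookkeeping. Specifically, I would verify that the descending numerator for $\binom{-n}{k}$ contains exactly $k$ factors, that precisely $k$ sign changes therefore occur, and that the resulting ascending product telescopes correctly to $(n+k-1)!/(n-1)!$ rather than being off by one factor at either end. Should one prefer to avoid factorial notation altogether, an alternative route is induction on $k$ via Pascal's recurrence $\binom{r}{k}=\binom{r-1}{k}+\binom{r-1}{k-1}$ specialized to $r=-n$; but the direct manipulation above is shorter and sidesteps the base-case and index-shift analysis that the recurrence would introduce.
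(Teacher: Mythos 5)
Your proof is correct, but it takes a genuinely different route from the paper's. You compute directly from the falling-factorial definition of the generalized binomial coefficient: substitute $r=-n$ into $\binom{r}{k}=r(r-1)\cdots(r-k+1)/k!$, extract one sign from each of the $k$ factors, and identify the rising factorial $n(n+1)\cdots(n+k-1)$. The paper never invokes this definition; instead it treats the Pascal recursion $\binom{n}{k}=\binom{n-1}{k-1}+\binom{n-1}{k}$ as the \emph{defining} property of the extension to negative upper index (which is exactly how the extension is used later, in Stage 4 of Section 9), and argues by induction on $n$ that the closed form $(-1)^k\binom{n+k-1}{k}$ is consistent with that recursion, concluding that the proposed value ``is indeed that which obeys the Pascal recursion relation.'' Both arguments are valid, but they pin down two a priori different objects: yours evaluates the polynomial (falling-factorial) extension, the paper's identifies the unique Pascal-recursion extension. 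The two coincide because the falling-factorial values visibly satisfy Pascal's rule, so connecting your computation to the paper's intended use costs only one extra line, but that line should be said. What your route buys: it is shorter, it sidesteps the error-prone index bookkeeping of the paper's telescoping substitutions (whose displayed middle step $(-1)^k\binom{m+k+1}{k}$ is a slip for $(-1)^k\binom{m+k-1}{k}$), and it makes transparent that the hypothesis $k\leq n$ is superfluous. What the paper's route buys: it matches the recursion-based way the negative-index Pascal triangle is generated where the lemma is applied. One small caution on your write-up: the rewriting of the rising factorial as $(n+k-1)!/(n-1)!$ presupposes $n\geq 1$; for $n=0$ you should stay with the product form (empty for $k=0$, containing the factor $0$ for $k\geq 1$), which still yields the asserted value.
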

\begin{proof} The formula is valid for $n=0$
Assume that the stated formula is valid for values of $n$ smaller than some non-negative integer value $m$.
Using the above formula, we find that
$$\begin{matrix}
\binom{-(m+1)}{k}\;+\;\binom{-(m+1)}{k-1}&\;=\;&(-1)^k\;(\binom{(m+k}{k}\;-\;\;\binom{(m+k-1}{k-1})\\
&\;=\;&(-1)^k\;\binom{m+k+1}{k}\\&\;=\;&\binom{-m}{k},\\
\end{matrix}$$
so the proposed definition for $\binom{-n}{k}$ is indeed that which obeys the Pascal recursion relation.
\end{proof}

\begin{conjecture}
For any non-negative value of $n$, and for any integer value of $s$ (the shift) with $0\leq s<n$,
$$
\sum^n_{i=0} \; (-1)^i \; q^{\binom{i-s}{2}} \; \bin{n}{i}{q} = \delta^n_0
$$
\end{conjecture}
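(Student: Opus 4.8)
The plan is to reduce the claim to a single one-step recursion in $n$, exactly in the telescoping spirit of the proof of Theorem 4, and then read off the vanishing from one zero factor. Write $F_s(n)$ for the left-hand side,
$$F_s(n) \;=\; \sum_{i=0}^n (-1)^i \, q^{\binom{i-s}{2}} \, \bin{n}{i}{q}.$$
First I would observe that the hypothesis forces $n\geq s+1\geq 1$, so the right-hand side $\delta^n_0$ is simply $0$; the value $n=0$ is vacuous, since no $s$ satisfies $0\leq s<0$. Thus the goal is to prove $F_s(n)=0$ whenever $0\leq s<n$, and I expect this to follow from a recursion lowering $n$ by one.

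To obtain that recursion I would split $F_s(n)$ using the $q$-Pascal identity $\bin{n}{i}{q} = q^{\,n-i}\bin{n-1}{i-1}{q} + \bin{n-1}{i}{q}$. The second piece reproduces $F_s(n-1)$ directly, the $i=n$ contribution vanishing because $\bin{n-1}{n}{q}=0$. In the first piece I would reindex by $j=i-1$ (the $i=0$ contribution vanishes since $\bin{n-1}{-1}{q}=0$) and simplify the exponent. The crucial input is the polynomial identity $\binom{m+1}{2} = \binom{m}{2} + m$, valid for every integer $m$ and hence — by Lemma 1 — consistent with the extended binomials used when $i-s<0$; applying it with $m=j-s$ converts the exponent $\binom{j+1-s}{2} + n - j - 1$ into $\binom{j-s}{2} + (n-s-1)$. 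This shows the first piece equals $-q^{\,n-s-1}F_s(n-1)$, so that
$$F_s(n) \;=\; \bigl(1 - q^{\,n-s-1}\bigr)\, F_s(n-1).$$
Unrolling this downward from $n$, the factor $1 - q^{\,(s+1)-s-1} = 1 - q^0 = 0$ appears as soon as the index reaches $s+1$, and since $n\geq s+1$ this factor always occurs; hence $F_s(n)=0$. Notably, no base value $F_s(s)$ need be computed, because a single vanishing factor annihilates the whole product.

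I expect the main obstacle to be bookkeeping rather than conceptual: one must check that the exponent arithmetic remains valid for the summands with $i<s$, where $i-s$ is negative and $\binom{i-s}{2}$ must be read through the polynomial (equivalently, Lemma 1) extension, and that the boundary vanishing of the $q$-binomials holds uniformly across the shifted index ranges. Once this is verified the recursion and its consequence are routine. As a cross-check I would note an even shorter alternative: factoring $q^{\binom{i-s}{2}} = q^{\binom{s+1}{2}}\,q^{\binom{i}{2}}\,(q^{-s})^i$ by the same polynomial identity and invoking the $q$-binomial theorem yields $F_s(n) = q^{\binom{s+1}{2}}\prod_{k=0}^{n-1}\bigl(1 - q^{\,k-s}\bigr)$, whose factor at $k=s$ vanishes precisely when $0\leq s<n$. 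I would keep the recursion as the primary argument, since it stays entirely within the elementary tools already developed in the paper.
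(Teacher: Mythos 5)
Your proof is correct, and the first thing to say is that there is no proof in the paper to compare it with: the statement is Conjecture~2, which the paper leaves open, proving only its unshifted instance $s=0$ (Theorem~4). Your argument therefore settles the conjecture. Methodologically it is a natural sharpening of the paper's own technique: the proof of Theorem~4 rests on the same $q$-Pascal identity $\bin{n}{i}{q} = q^{n-i}\,\bin{n-1}{i-1}{q} + \bin{n-1}{i}{q}$ (quoted there as $\bin{i-1}{j}{q} = \bin{i}{j}{q} - q^{i-j}\bin{i-1}{j-1}{q}$) used in a telescoping chain of substitutions starting from $0=\bin{n-1}{n}{q}$, whereas you package it as the one-step recursion
$$
F_s(n) \;=\; \bigl(1-q^{\,n-s-1}\bigr)\,F_s(n-1),
$$
whose derivation checks out: the two boundary terms vanish by the conventions $\bin{n-1}{n}{q}=\bin{n-1}{-1}{q}=0$ that the paper itself uses, and the exponent step $\binom{m+1}{2}=\binom{m}{2}+m$ is a polynomial identity valid for every integer $m$, hence consistent with the Lemma~1/polynomial reading of $\binom{i-s}{2}$ for $i<s$. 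Unrolling down to level $s+1$ produces the factor $1-q^{0}=0$, which is exactly where the hypothesis $0\le s<n$ enters, and no base case is needed. Your cross-check is also correct and in fact stronger than the conjecture: since $\binom{i-s}{2}=\binom{i}{2}+\binom{s+1}{2}-si$, the $q$-binomial theorem yields the closed form
$$
F_s(n) \;=\; q^{\binom{s+1}{2}}\;\prod_{k=0}^{n-1}\bigl(1-q^{\,k-s}\bigr),
$$
which evaluates the sum for \emph{every} integer shift $s$ and shows the vanishing holds precisely when $0\le s\le n-1$, i.e.\ the conjecture's hypothesis is sharp. Either version could be inserted into the paper to upgrade Conjecture~2 to a theorem, which would in turn solidify Stage~3 of the inversion argument sketched in Section~9.
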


The next step is to prove that the operators $\alpha$ and $\beta$ are inverse to one another in the incidence algebra of the ``rectangular'' poset of pairs of non-negative integers, with $(c,d)\leq (a,b)$ iff $c\leq a$ and $d\leq b$, so that the action of $\alpha$, then $\beta$ on a Tutte polynomial will convert it to the corresponding rank generating function, then back again to the original Tutte polynomial.

In a sense, we already "know" that the conversion processes are  inverse to one another, simply because
the Python program we have written to carry out these operations works on small examples. The processes are linear over the ring of $q$-polynomials. If anything were incorrect, these calculations would {\it never} work, not even for small examples. But that's practice, not theory.

It would suffice to show that the formula $\beta$ is correctly derived from the algorithm (recursive subtraction)
used to discover it. For the moment it seems preferable simply to prove that the operators are inverse to one another, i.e.: that the formula $\beta$ (found by experimentation) is inverse to the proven formula $\alpha$ for passage from the Tutte polynomial to the rank generating function. Here is a sketch of the beginning of a proof.

\begin{proof}
We define an $(a-e)\times(b-f)$ matrix $M$ of $q$-polyomials $$M_{i,j}(a,b;e,f)\;=\;\alpha(a,b;c,d)\;\beta(c,d;e,f),$$ where $i=c-e$ and $j=d-f$, and $M_{i,j}$ is the contribution to the concatenation product of $\alpha$ with $\beta$ due to the intermediate pair 
$(c,d) = (e-i,f+j)$, with $(e,f)\leq (e+i,f+j)\leq (a,b)$.
Thus
$$
M_{i,j}(a,b,e,f) = \bin{a}{c}{q} \bin{b}{d}{q} \bin{c}{e}{q} \bin{d}{f}{q}
q^{(a-c)(b-d)} (-1)^{i+j}\;q^{(a-e-i)(b-f-j)}\;q^{\binom{|i-j|}{2}}\;(1+q^{|i-j|}-q^{max(i,j)})
$$
We can simplify this expression by observing that
$$\begin{matrix}
\bin{a}{c}{q} \bin{b}{d}{q} \bin{c}{e}{q} \bin{d}{f}{q}
 = \frac{\faq{a}}{\faq{c}\faq{a-c}} \frac{\faq{c}}{\faq{e}\faq{c-e}} \frac{\faq{a-e}}{\faq{a-e}}\\
\phantom{}\\
 = \frac{\faq{a}}{\faq{e}\faq{a-e}} \frac{\faq{a-e}}{\faq{c-e}\faq{a-c}} 
= \bin{a}{e}{q} \bin{b}{f}{q} \bin{a-e}{i}{q} \bin{b-f}{j}{q},\\
\end{matrix}$$
thus revealing a sizeable common factor $F = \bin{a}{e}{q} \bin{b}{f}{q}$, independent of $i$ and $j$. Let $N_{i,j}(a,b;e,f)$ be the quotient of $M_{i,j}(a,b;e,f)$ by this common factor $F$: 
$$\begin{matrix}
N_{i,j}(a,b;e,f) &=& \bin{a-e}{i}{q} \bin{b-f}{j}{q} 
(-1)^{i+j}\;q^{(a-e-i)(b-f-j)}\;q^{\binom{|i-j|}{2}}\;(1+q^{|i-j|}-q^{max(i,j)})\\
&=&N_{i,j}(a-e,b-f,0,0)\hfill\\ 
\end{matrix}$$
In what follows, we write $N_{i,j}(a,b;0,0)$ more simply, as $N_{i,j}(a,b)$.
$$
N_{i,j}(a,b) \;=\; \bin{a}{i}{q} \bin{b}{j}{q} 
(-1)^{(i+j)}\;q^{(a-i)(b-j)}\;q^{\binom{|i-j|}{2}}\;(1+q^{|i-j|}-q^{max(i,j)})
$$
It thus suffices to prove that, for any non-negative integers $a$ and $b$, the sum of the $q$-polynomial entries in the matrix $N_{i,j}(a,b)$
is equal to the product $\delta_0^a\;\delta_0^b$ of Kronecker deltas.

\end{proof}

There is a path open toward a reasonable proof of this equality. 
\begin{enumerate}
\item[Stage 1.] The reduced matrix of entries $N_{i,j}$ for $a,b,e,f$ is equal to that for $a-e,b-f,0,0$, 
so we need consider only the instances $a,b,0,0$. {\it(Proof:)} The second formulation of $N_{i,j}$, above, is unchanged when $a$ and $e$ are replaced by $a-e$, and $0$, respectively.\\
\item[Stage 2.] In the instance $a,b,0,0$, all row sums are equal to zero whenever $a>b$, and all column sums are equal to zero whenever $a<b$. {\it(proof required)}\\
\item[Stage 3.] Say $a=b$. All column sums $C_i$ are divisible by the final column sum $C_a$. When this common factor is removed, the reduced column sums are {\it(proof required)}
$$
q^0\;\bin{a}{0}{q} \;-\;q^0\;\bin{a}{1}{q} \;+\;q^{\binom{2}{2}}\bin{a}{2}{q}\;-\;\dots\;
-\;q^{\binom{a-1}{2}}\bin{a}{a-1}{q}\;+\;q^{\binom{a}{2}}\bin{a}{a}{q},
$$
the sum being equal to zero, by Conjecture 2.
\item[Stage 4.] Say $a>b$. Let $p_i$, for $i$ non-negative, be the sequence $\binom{i}{2}$. We must extend this sequence into the negative index domain, using the recursion relation for the Pascal triangle, $\binom{n}{i}\;=\;\binom{n-1}{i-1}+\binom{n-1}{i}$
and the initial condition $\binom{0}{i}=\delta^i_0$ (the Kronecker delta). This yields the general formula valid for all integer values of $n$:
$$
\binom{n}{k}\;=\;(-1)^k\;\binom{(k-n)-1}{k}.
$$
 in a rather strange way, setting $p_i = \binom{i+1}{2}$ for $i<0$.
Then, for all $i$ in the interval $0\leq i\leq a$, the $i^{th}$ column sum $C_i$ is divisible by the {\it truncated} final column sum $C_a$. 
By "truncated" we mean "with all factors of $q$ removed". And when this
common factor is removed, the remaining $q$-polynomial $C_i/C_a$ is the sum of two polynomials:
$$\begin{matrix}
C_0/C_a&\;=\;&q^{p_a}\bin{a}{0}{q}&&\\
C_1/C_a&\;=\; & -   q^{p_{b-i}}\bin{a-1}{1}{q} &-&   q^{p_{b-i+1}}\bin{a-1}{0}{q}\\
C_2/C_a&\;=\; & +   q^{p_{b-i}}\bin{a-1}{2}{q} &+&   q^{p_{b-i+1}}\bin{a-1}{1}{q}\\
\dots\\
C_{a-2}/C_a&\;=\; & +   q^{p_{b-i}}\bin{a-1}{a-2}{q} &+&   q^{p_{b-i+1}}\bin{a-1}{a-3}{q}\\
C_{a-1}/C_a&\;=\; & - q^{p_{b-i}}\bin{a-1}{a-1}{q} &-&   q^{p_{b-i+1}}\bin{a-1}{a-2}{q} \\
C_a/C_a&\;=\;&&&\bin{a-1}{a-1}{q}\\
\end{matrix}$$
both columns in this display are alternating sums of $q$-binomial coefficients, with  multipliers ($q^{p_k}$). {\it(A suitable extension of Conjecture 2 is required, using this sequence $p_k$ with negative indices. The proof of such a theorem may resemble that for Conjecture 2 if we simply extend the definition of the $q$-Pascal triangle into the negative domain, using the same recurrence relation.)}. 
\item[Stage 5.] For square matrices, with $a=b$, the matrix is symmetric. All column sums are divisible by the final column
sum, for $i=a$, and when this common factor is removed, the reduced sums are exactly an alternating sum of $q$-binomial coefficients $\bin{a}{i}{q}$, with the required multipliers $q^{\binom{i}{2}}$, so the sum is zero by Theorem 4. {\it(Proof required for the reduction to this sum of the formula for $N_{i,j}$.)}
\end{enumerate}

\section{Partitions of a $q$-matroid into prime-free minors}

The correct numbers of prime-free minors of each length and activities can be obtained by the conversion process we have been using to pass from the rank generating function to the Tutte polynomial. It seemed for a while that any maximal such partition (maximal in the lattice of partitions into minors) in which all minors are prime-free, would be a partition leading to the Tutte polynomial. This is false, as the example below will indicate. It may still be true that any partition into a {\it minimal number of} prime-free minors will lead to the Tutte polynomial. (That remains to be proven.) There may, however, be maximal partitions (that is, in which no two partition parts can be merged) that do not have a minimal number of parts, as the following example, on the uniform $q$-matroid $U_{2,1}$, shows.

The bottom flat $\bo$ will be clopen in a minor of length two, with internal activity 2. The top flat $\tp$ will be clopen in a minor of length 1, with external activity 1. Say these two prime-free minors are $[\bo,L001]$ and $[L011,\tp]$. Then the remaining prime-free minors will form a maximal covering of the graph on the left of Figure~10 by disjoint edges. There are two such maximal coverings, shown center and right. But the choice on the right has a smaller number of partition parts, so it is maximal in the strong sense, even though it cannot be formed by merging two vertices of the center subgraph into an edge.

In Figure~10, we show the proposed clopen flats as boxed, the proposed added minors,  in addition to $[\bo,L001]$ and $[L011,\tp]$ as red edges joining a projective point to a projective line. In each such case of minors of height 1, it will be the point which is clopen.

\begin{figure}[h] 
\centering\includegraphics[scale=.64]{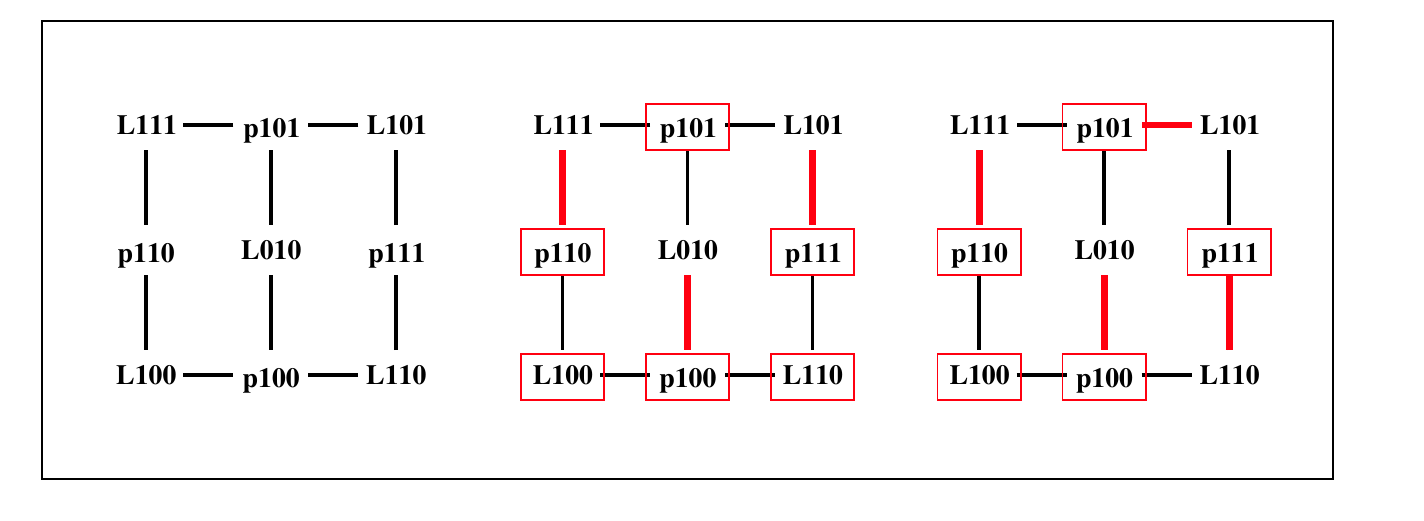} 
\caption{Maximal partitions, the Tutte partition on the right.}
\end{figure}

\section{Matroids on more general support lattices}

Significant research has been devoted to finding a way to consider Boolean algebras as lattices of subspaces of a vector space over the {\it  one-element field}. There is no such field, so this is a ``way of speaking''. But Boolean algebras $B(S) = 2^S$, as lattices, are complemented modular, but are irreducible only when $S$ has no more than one element. So if we were to define matroids on support lattices that are arbitrary complemented modular lattices, the study of matroids and of $q$ matroids become aspects of the same subject. In fact, this is completely straight forward. In Figure 11, we show the twelve matroids constructable on a support lattice $L$ that is the direct product of $L_{2,2}$ with the Boolean algebra of a 1-point set. Each row of the diagram is a pair of dual matroids on support lattice $L$. In the geometric figures, the support flats that are independent are marked in blue, dependent in yellow, and the outer circle stands for the entire plane.

\begin{figure}[h] 
\centering\includegraphics[scale=.5]{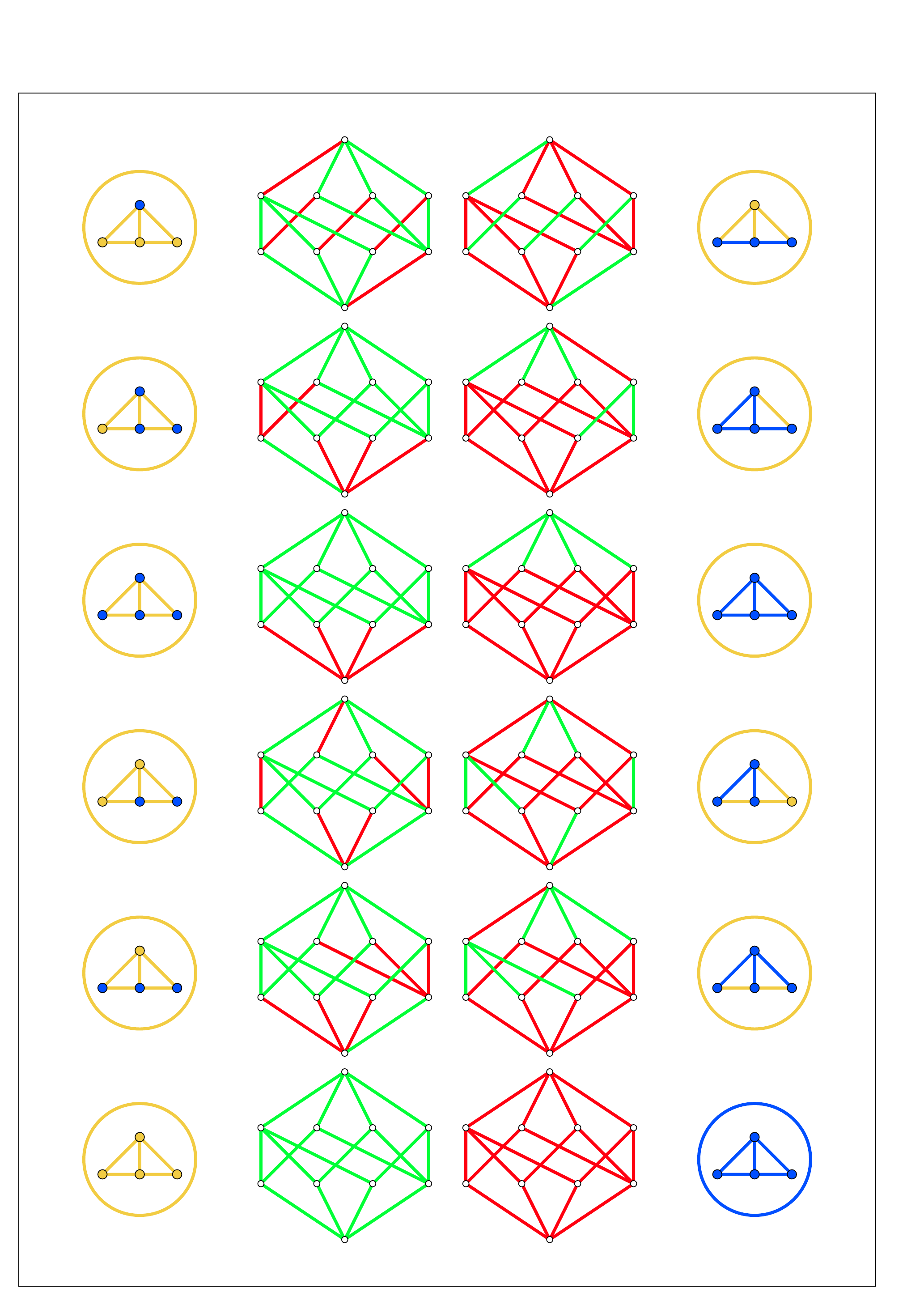} 
\caption{The twelve isomorphism classes of matroids over this support lattice.}
\end{figure}

An axiom system for such matroids on a {\it modular} lattice $L$  is exactly that given above for $q$-matroids: every diamond must be either zero, one, mixed, or prime. These structures do not have
the usual matroid properties, with their several cryptomorphic axiom systems, until we insist that the support lattice $L$ be {\it relatively complemented}, that is, that every minor is complemented. To see how bad things can get for non-complemented support lattices, consider the following example. If $L$ is the chain $0 < 1 < 2 < 3 < 4$, and if covers $[0,1]$ and $[2,3]$ are red, $[1,2]$ and $[3,4]$ are green, then $1$ is maximal independent, while $3$ is minimal spanning, so there are no bases.

Non-complemented support lattices do, however, give rise to weaker (non-matroidal) quotient structures such as quasi-matroids [F], selectors [C4], anti-matroids [JW], [E], greedoids [K-L].

\clearpage

\section{Bibliography}


[JP] R.~Jurrius and R.~Pellikaan, Defining the $q$-Analogue of a Matroid, \hfill\\ arXiv:1610.09250, October 2016.

[Co] Henry Cohn, Projective Geometry over $F_1$, and the
Gaussian Binomial Coefficients, (add reference)

[KC] Victor Kac, Pokman Cheung, Quantum Calculus, Springer Universitext, 2001.

[K] Donald Knuth, Concrete Mathematics\\
https://www.csie.ntu.edu.tw/\~\;r97002/temp/Concrete\%20Mathematics\%202e.pdf 

[B] John Baez, This week's finds in mathematical physics, "Weeks" 183-188, http://math.ucr.edu/home/baez/week183.html  

[LW] J.H. van Lint, R.M. Wilson: A Course in Combinatorics. Cambridge University Press, second edition, 2001, Chapter 24.

[C1] H.~Crapo, On the Theory of Combinatorial Independence, Doctoral Thesis, M.I.T., 1964, 228 pp.

[C2] H.~Crapo, Lattice Differentials, and the Theory of Combinatorial Independence, Research Reports, Northeastern University, 1964.

[C3] H.~Crapo, The Tutte Polynomial, Aequationes Mathematicae [{\bf 3}] (1969), 211-229.

[T1] W.~T.~Tutte, On Dichromatic Polynomials, J. Combin. Theory [{\bf 2}], 1967, 301-320. 

[F]  U.~Faigle, Geometries on partially-ordered sets, J. Combin. Theory Ser.B [{\bf 28}] (1980), 26-51.

[JW] R.~Jamison-Waldner, Copoints in antimatroids, in ``Proceedings, 11th Southeast Conference on Combinatorics, Graph Theory, and Computing'', Congr. Numer. [{\bf 29}] (1980), 535-544.

[E] P.~H.~Edelman, Meet-distributive lattices and the anti-exchange closure, Algebra Universalis [{\bf 10}] (1980), 290-299.

[K-L] B.~Korte and L.~Lovasz, ``Greedoids, a Structural Framework for the Greedy Algorithm'', Bonn University report 82230-OR; in ``Proceedings,Silver Jubilee Conference on Combinatorics'',1982.

[C4] H.~Crapo, Selectors: A Theory of Formal Languages, Semimodular Lattices, and Branching and Shelling Processes, Advances in Math. [{\bf 54}] (1984), 233-277.

\end{document}